\documentclass[]{article}
\usepackage{amsmath}
\usepackage{amsfonts}
\usepackage{graphicx}
\usepackage{amsthm}
\usepackage{mathtools}
\usepackage{hyperref}
\usepackage{amssymb}
\usepackage{xcolor}

\theoremstyle{plain}

\newtheorem{prop}{Proposition}[section]
\newtheorem{thm}{Theorem}
\newtheorem{defn}{Definition}[section]

\newtheorem*{thm*}{Theorem}
\newtheorem{cor}{Corollary}[section]

\newtheorem{rem}{Remark}[section]
\title{Margulis-Soifer theorem for one-relator groups}
\author{Azer Akhmedov \footnote{Azer Akhmedov, Department of Mathematics,
North Dakota State University,
Fargo, ND, 58102, USA. E-mail: azer.akhmedov@ndsu.edu}, Atabay Mahmudov\footnote{Atabay Mahmudov, Department of Mathematics,
North Dakota State University,
Fargo, ND, 58102, USA. E-mail: atabay.mahmudov@ndsu.edu}}
\date{September 22, 2026}

\begin{document}

	\maketitle
	\begin{abstract}
		We establish the Margulis–Soifer dichotomy for one-relator groups: every one-relator group is either virtually solvable or has a maximal subgroup of infinite index. We also present examples of one-relator groups with and without free maximal subgroups of infinite index, as well as examples that possess both free and non-free infinite index maximal subgroups. Triviality of the Frattini subgroup is also shown for all non-solvable one-relator groups. We close the paper with a short list of questions.
	\end{abstract}

	\section{Introduction} 
    The Margulis-Soifer theorem is an important dichotomy for linear groups. It states that a finitely generated linear group is either virtually solvable or contains a maximal subgroup of infinite index, and the two cases are mutually exclusive $\cite{ms1, ms2, margulis-soifer}$. In addition to linear groups, this dichotomy has been shown to hold, sometimes with a minor modification, for numerous other classes of groups; we refer the reader to the survey $\cite{ggs}$. 

 \medskip 
 
    For $PL_{+}(I)$, the group of piecewise linear orientation preserving homeomorphisms of the interval $I = [0,1]$, the dichotomy fails. In a forthcoming paper \cite{am}, the authors state and prove a right analog of this dichotomy in the class $PL_{+}(I)$ as well as examine it for various classes in $\mathrm{Homeo}_{+}(I)$. 

    \medskip 
    
     In this paper, we obtain a positive result for one-relator groups.

     \begin{thm} \label{thm: main} A one-relator group contains a maximal subgroup of infinite index if and only if it is not virtually solvable. 
     \end{thm}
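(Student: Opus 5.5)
Both directions need an argument, and I would handle the easy one first. Suppose $G$ is a one-relator group that is virtually solvable. I would invoke the classification of virtually solvable (equivalently, amenable or non-large) one-relator groups: such a group is cyclic, $\mathbb{Z}^2$, the Klein bottle group, or a solvable Baumslag--Solitar group $BS(1,n)=\langle a,t \mid tat^{-1}=a^n\rangle$. In each case I would show directly that every maximal subgroup has finite index. For the polycyclic cases this follows from the standard fact that maximal subgroups of finitely generated virtually nilpotent groups are normal of prime index. For $BS(1,n)\cong \mathbb{Z}[1/n]\rtimes\mathbb{Z}$ I would argue as follows. Let $M$ be maximal. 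If $M$ does not map onto $\mathbb{Z}$, then $M$ together with $\ker\to\mathbb{Z}$ generates a proper subgroup, so by maximality $M$ contains the kernel and has finite index. If $M$ maps onto $\mathbb{Z}$, write $A=\mathbb{Z}[1/n]$. Then $M\cap A$ is a $t$-invariant subgroup of $A$, and a short computation with finite index $t$-invariant subgroups of $A$ shows $M$ has finite index. (This is the known result that finitely generated solvable groups have all maximal subgroups of finite index.)

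For the hard direction, assume $G=\langle X\mid r\rangle$ is not virtually solvable. I would split into cases.

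\emph{Case 1: $|X|\ge 3$, or $G$ has torsion.} If $|X|\ge3$, then $G$ surjects onto a free group of rank $\ge 2$. If $r=s^m$ with $m\ge2$, then $G$ is virtually torsion-free hyperbolic (Newman's spelling theorem), so the result is known for non-elementary hyperbolic groups. A surjection onto a free group $F_k$, $k\ge2$, also suffices: $F_k$ is linear and not virtually solvable, so by Margulis--Soifer it has a maximal subgroup $M$ of infinite index. Its preimage in $G$ is then maximal of infinite index, since the correspondence theorem preserves maximality and index.

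\emph{Case 2: two-generator, torsion-free.} This is the core case. I would aim to use Margulis--Soifer through a non-virtually-solvable linear quotient, or more generally through any quotient in which the dichotomy is known. By the same pullback argument, it suffices to find a quotient $Q$ of $G$ that is not virtually solvable and has an infinite index maximal subgroup. Natural candidates are:
\begin{itemize}
\item a large quotient, when $G$ is large (many two-generator one-relator groups are);
\item a hyperbolic quotient;
\item a representation into $SL_2(\mathbb{C})$ with non-virtually-solvable image, obtained from the character variety of the two-generator group $\langle a,b\mid r\rangle$. This variety is cut out by a single equation in $\mathbb{C}^3$ via trace coordinates, hence has dimension at least $2$, and one would pick a point with Zariski-dense image.
\end{itemize}
I would try the $SL_2$ approach first: show that the set of representations with elementary (virtually solvable) image is a proper subvariety unless $G$ is itself virtually solvable. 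Combined with the classification above, this yields a non-elementary finitely generated linear quotient, to which Margulis--Soifer applies.

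The main obstacle is Case 2. Specifically, the difficulty is proving that a non-virtually-solvable two-generator one-relator group always admits a non-virtually-solvable linear quotient. The reducible and dihedral components of the character variety must be ruled out as exhausting everything, presumably by dimension counting together with the Magnus/Moldavanskii analysis of one-relator groups with $\exp_b(r)=0$ as HNN extensions. Failing that, I would fall back on the known results for groups acting on trees, such as the Margulis--Soifer type results for non-elementary HNN extensions and amalgams, applied to the Magnus--Moldavanskii splitting.
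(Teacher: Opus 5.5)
Your easy direction is fine apart from one slip: maximal subgroups of finitely generated virtually nilpotent groups need not be normal (the Klein bottle group maps onto $S_3$), but finite index is all you need. The hard direction has two genuine gaps.

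The first is your claim that a one-relator group with $|X|\ge 3$ surjects onto a free group of rank $\ge 2$. This is false. Take $G=\langle a,b,c\mid a^2b^2c^2\rangle$, the closed non-orientable surface group of genus $3$. A surjection $G\twoheadrightarrow F_2$ would pull $H^1(F_2;\mathbb{F}_2)$ back to a $2$-dimensional subspace of $H^1(G;\mathbb{F}_2)\cong\mathbb{F}_2^3$ on which the mod-$2$ cup product vanishes. That would be a totally isotropic plane for a nondegenerate form on $\mathbb{F}_2^3$, which does not exist. Deficiency $\ge 2$ only gives largeness (Baumslag--Pride): some finite-index $H\le G$ maps onto $F_2$, and a maximal subgroup of $H$ tells you nothing about $G$. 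The paper bridges this in Proposition~\ref{prop:quotientoflarge}. Replace $H$ by its normal core $K$, whose image in $F_2$ is still free of rank $r\ge 2$. Let $N_0$ be the $G$-core of $\ker(K\twoheadrightarrow F_r)$. Then $K/N_0$ embeds in a finite product of copies of $F_r$, so $G/N_0$ is virtually linear, hence linear, and not virtually solvable. Margulis--Soifer plus pullback then finishes.

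The second and decisive gap is Case 2, which you leave as a programme, and its main route cannot work. Consider the Baumslag--Gersten group $\langle a,t\mid (tat^{-1})a(tat^{-1})^{-1}=a^2\rangle$, which is two-generator, torsion-free and not solvable. In every finite quotient, let $n$ be the order of the image of $a$. Conjugating by $(tat^{-1})^n=ta^nt^{-1}=1$ gives $a=a^{2^n}$, so $n\mid 2^n-1$. This forces $n=1$: the least prime divisor $p$ of $n$ would satisfy $\operatorname{ord}_p(2)\mid\gcd(n,p-1)=1$. Hence all its finite quotients are cyclic. Since finitely generated linear groups are residually finite, every finitely generated linear quotient is cyclic. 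It has no large quotient either, so ``non-large'' is not equivalent to virtually solvable. Your dimension count is also wrong: $\rho(r)=I$ is three conditions, not the single equation $\operatorname{tr}\rho(r)=2$. For this group the character variety is just the curve of characters with $\rho(a)=I$.

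What is missing is a non-linear source of infinite-index maximal subgroups. The paper uses the Minasyan--Osin trichotomy:
\begin{itemize}
\item Acylindrically hyperbolic groups carry a highly transitive action by Hull--Osin, and its point stabilizers are maximal of infinite index. The example above is one, being a non-ascending HNN extension of $BS(1,2)$ whose associated cyclic subgroups $\langle a\rangle$ and $\langle tat^{-1}\rangle$ intersect trivially.
\item Generalized Baumslag--Solitar groups are treated via the cycle structure of the graph, giving quotients onto $F_2$, $\mathbb{Z}_d*\mathbb{Z}$, $\mathbb{Z}_p*\mathbb{Z}$, a highly transitive $BS(p,q)$, or a virtually $F\times\mathbb{Z}$ group.
\item For mapping tori, the automorphism case is virtually $F\times\mathbb{Z}$ or acylindrically hyperbolic. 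In the strictly ascending case one builds the proper profinitely dense free subgroup $\langle\phi(F),ut\rangle$ and applies Proposition~\ref{prop:profinitedense}.
\end{itemize}
Your fallback to tree-action results for HNN extensions does not reach that last case, since an ascending HNN extension fixes an end of its Bass--Serre tree.
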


     Recall that a one-relator group is virtually solvable if and only if it is solvable (this interesting property is also shared by the groups in $PL_{+}(I)$, see \cite{bleak}). Non-solvable one-relator groups contain a copy of a non-Abelian free group, so the Tits Alternative holds in the class of one-relator groups.  On the other hand,  solvable non-trivial one-relator groups consist of the following short list up to an isomorphism: $$\mathbb{Z}, \ \mathbb{Z}/n\mathbb{Z} \  \mathrm{for} \ n\geq 2, \ \mathbb{Z}^2, \ \pi _1(\mathrm{Klein \ bottle}) \cong \langle a, b \ | \ aba^{-1} = b^{-1}\rangle \  \mathrm{and \ the}$$  $$\mathrm{\ Baumslag-Solitar \ groups} \  BS(1,n) = \langle a, b \ | \ aba^{-1} = b^n\rangle , n\in \mathbb{Z}, |n| \geq 2.$$ All these groups are linear, so by Margulis-Soifer theorem every maximal subgroup is necessarily of finite index.  This fact can also be verified directly; it is a small exercise for $BS(1,n)$ and trivial for the other groups in the list. 

\medskip 

     For an $n$-generated one-relator group, we treat the cases of $n\geq 3$ and $n=2$ separately. In the former case, we use the largeness of these groups and observe an important property of large groups, see Proposition $\ref{prop:quotientoflarge}$.  As a byproduct, we also show that finitely generated large groups have infinite girth, thus answering the question in $\cite{azer2026}$. In a recent work \cite{abgm}, the authors obtain the same result (about girth) independently with a similar method. In the latter case, the proof is more involved; we use a characterization of Minasyan-Osin which shows that a 2-generated one-relator group belongs to at least one of the three types, namely, it is either acylindrically hyperbolic, or it is a special type of an HNN extension (possibly infinite cyclic), or it is a mapping torus of a free group (see Proposition \ref{prop:osin} for a precise statement). The first case lends itself to an existing result in the literature, but in the second and third cases, we divide them further in sub-cases so that in each variation we are able to establish the existence of a maximal subgroup of infinite index.

     \medskip 

      In the proof of Margulis-Soifer Theorem  \cite{ms2}, for a non-virtually solvable linear group, a proper profinitely dense free subgroup $F$ is produced. This free subgroup $F$ cannot be contained in any proper finite index subgroup. Then, by Zorn's lemma, $F$ is contained in a maximal subgroup $M$ of infinite index. Here, because of the use of Zorn's lemma, one cannot control the algebraic properties of $M$. In particular, freeness (when passing from $F$ to $M$) can easily be lost. One of the very interesting questions about infinite-index maximal subgroups is whether one can expect them to be finitely generated. This question ({\bf the Finite Generation Problem}) is still open for the groups $SL(n,\mathbb{Z}), n\geq 3$. It is also interesting in the context of one-relator groups.  We do not know a single example of a non-solvable one-relator group which possesses a finitely generated infinite index maximal subgroup. Regarding freeness, it is unclear for which linear groups one can hope for the existence of a free maximal subgroup of infinite index ({\bf the Freeness Problem}). This question too is open for the groups $PSL(n,\mathbb{Z}), n\geq 3$ (see \cite{gm}) Perhaps even more interestingly, it is unclear how to identify linear groups where all infinite index maximal subgroups are free. Let us point out that, by standard covering theory and by Ahlfors' theorem \footnote{This theorem states that the fundamental group of a connected non-compact surface (i.e. 2-manifold) is free.}, for a closed connected surface $\Sigma $ (other than $\mathbb{S}^2$ and $\mathbb{RP}^2$), in the fundamental group $\pi _1(\Sigma )$, 1) all infinite index maximal subgroups are free, and free maximal subgroups are of infinite index; 2) any finitely generated maximal subgroup has a finite index. (for the second claim, we also need to invoke Scott's subgroup separability theorem \cite{scott}).  

      \medskip 
      
      Another interesting question is if, in a given group, infinite index maximal subgroups are abundant; for example, if there exist infinite index maximal subgroups of different nature or if there are $2^{\aleph _0}$ infinite index maximal subgroups ({\bf the Variety Problem}). This problem has been highlighted in the survey \cite{ggs} for linear groups.  

\medskip 

       We study free maximal subgroups in the class of one-relator groups with the Freeness Problem and the Variety Problem in mind. We obtain the following theorem. 

      \begin{thm} \label{thm:main2} a) If $\Gamma $ is a non-cyclic one-relator group that has torsion or is 2-free, then $\Gamma $ has a free maximal subgroup of infinite index.

b) There exists a one-relator group $\Gamma $ with torsion that possesses both a free infinite index maximal subgroup and a non-free infinite index maximal subgroup. 

c) There exists a torsion-free one-relator group that possesses both a free infinite index maximal subgroup and a non-free infinite index maximal subgroup.

      \end{thm}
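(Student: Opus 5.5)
The plan for (a) is to avoid choosing a maximal subgroup by Zorn's lemma, which gives no control over its algebraic type. Instead I would build the maximal subgroup by hand, as an ascending union of free groups in which each group is a free factor of the next. Such a union is automatically free. Enumerate $\Gamma\setminus\{1\}=\{g_1,g_2,\dots\}$. I will construct finitely generated free subgroups $H_1\le H_2\le\cdots$ and a set $R$ of rejected indices satisfying three conditions:

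(i) $H_k$ is a free factor of $H_{k+1}$;

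(ii) $H_k$ is of infinite index, and in the torsion case quasiconvex, since a one-relator group with torsion is hyperbolic;

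(iii) for each $i<k$, either $g_i\in H_k$ or $\langle H_k, g_i\rangle=\Gamma$.

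Put $M=\bigcup_k H_k$. Then $M$ is free. For every $g\notin M$ we have $\langle M,g\rangle=\Gamma$, so $M$ is maximal, provided $M\ne \Gamma$. To guarantee $M\neq\Gamma$, I would start with $H_1$ generated by a single element $x$ and fix an element $y$ that is never admitted, for instance a torsion element in the torsion case. By (iii), $y$ is rejected at the first step where it is considered. This forces $M$ to be proper, and in fact of infinite index, because a maximal subgroup of finite index would have to be free of finite index in $\Gamma$. In the torsion case that is impossible. In the 2-free case it would force $\Gamma$ to be free, and I would treat that case separately (free groups are handled by the same scheme).

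The inductive step is the heart of the argument. Given $H_k$ and $g=g_k$, there are two alternatives. Either $\langle H_k,g\rangle$ is free with $H_k$ as a free factor, in which case I put $H_{k+1}=\langle H_k,g\rangle$. Or I must arrange that $\langle H_k,g\rangle=\Gamma$. The second alternative is not automatic. My plan is to keep a reserve of room: at each stage $H_k$ should contain elements that can be adjoined freely to any candidate. For the torsion case I would use Newman's spelling theorem and the hyperbolicity of $\Gamma=\langle X\mid r^n\rangle$. A combination theorem for quasiconvex subgroups (ping-pong with high powers of loxodromics) shows the following. Either $g$ lies in the commensurator-type obstruction, namely a finite subgroup or an elementary subgroup meeting $H_k$; or, after first enlarging $H_k$ by a free factor $\langle z^N\rangle$ with $z$ a suitable loxodromic, $\langle H_k,g\rangle$ splits as $H_k * \langle g'\rangle$. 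Elements of the obstruction type, essentially conjugates of roots of $r$, I would make generate $\Gamma$ together with $H_k$. To do this, choose $x$ at the start so that $\langle x, \text{any conjugate of } r\rangle$ already sees all generators modulo a small correction, and add the needed generators to $H_k$ only as free factors. In the 2-free case the analogous splitting comes directly from $2$-freeness, applied to pairs $\langle h,g\rangle$ for a suitable primitive element $h$ of $H_k$, combined with Magnus' Freiheitssatz.

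I expect the main obstacle to be this dichotomy. I must ensure that every element that cannot be freely adjoined actually generates $\Gamma$ with $H_k$, rather than generating some intermediate proper subgroup. To handle it I would first try to strengthen the induction hypothesis, requiring $H_k$ to be "profinitely dense relative to $\Gamma$". This would let the only obstruction to free adjunction be algebraic relations coming from the relator, and those I would control with the spelling theorem.

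For (b) I would take $\Gamma=\langle a,b\mid a^n\rangle\cong \mathbb{Z}/n\mathbb{Z}*\mathbb{Z}$. Part (a) gives a free maximal subgroup of infinite index. For a non-free one, I would run the same union construction but start with $H_1=\langle a, bab^{-1}\rangle\cong \mathbb{Z}/n*\mathbb{Z}/n$, using free-product factors in place of free factors. The resulting maximal subgroup contains torsion, so it is not free, and it is still of infinite index by the same density argument. For (c) I would take $\Gamma=\langle a,b,c\mid [a,b]\rangle\cong\mathbb{Z}^2*\mathbb{Z}$. For the non-free maximal subgroup, start the construction at $H_1=\langle a,b\rangle\cong\mathbb{Z}^2$. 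For the free one, start at $\langle c, acb\rangle$. In both cases I would use the free-product structure, via the Kurosh subgroup theorem, to control the inductive step.
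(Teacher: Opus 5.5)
\textbf{There is a genuine gap.} Your framework is sound: an ascending union of free groups, each a free factor of the next, together with condition (iii). But all of the content of (a) sits in the inductive step, which you do not supply, and as sketched it would fail.

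For a prescribed $g$ there is no dichotomy ``$H_k$ is a free factor of $\langle H_k,g\rangle$, or one can arrange $\langle H_k,g\rangle=\Gamma$''. Suppose $g\notin H_k$ but $g^m\in H_k\setminus\{1\}$, or $ghg^{-1}\in H_k$ for some $1\neq h\in H_k$. Then $H_k$ is not a free factor of $\langle H_k,g\rangle$, since free factors are malnormal and closed under roots. Moreover $\langle H_k,g\rangle$ is in general proper: if $x$ has a proper root $g$, then $\langle H_1,g\rangle=\langle g\rangle$.

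Since $H_k$ is already fixed, (iii) then forces you to find a free $H_{k+1}\supseteq H_k$, with $H_k$ a free factor, such that $g\notin H_{k+1}$ but $\langle H_{k+1},g\rangle=\Gamma$, while leaving room for all later steps. The same mechanism is what must produce $\langle H_k,y\rangle=\Gamma$ for your never-admitted $y$. Nothing you cite does this:
\begin{itemize}
\item Gitik/Arzhantseva-type combination theorems give $\langle H,z^N\rangle\cong H*\langle z^N\rangle$ for high powers, or for some suitably chosen $z$, not for the given $g$. Adjoining $\langle z^N\rangle$ does not remove the relations between $H_k$ and $g$.
\item The obstructions are arbitrary relations between $g$ and $H_k$, not just conjugates of roots of the relator.
\item In the $2$-free case, freeness of $\langle h,g\rangle$ says nothing about $\langle H_k,g\rangle$, and there is no torsion element to play the role of $y$.
\end{itemize}
Your proposed repair also cannot work. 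A one-relator group with torsion is hyperbolic and virtually compact special (Wise), hence QCERF. So a finitely generated quasiconvex subgroup that is profinitely dense must equal $\Gamma$; condition (ii) and profinite density of $H_k$ are incompatible.

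The index argument is also wrong. Your claim that a free maximal subgroup of finite index is impossible in the torsion case is false. In $\langle a,b\mid a^p\rangle\cong\mathbb{Z}/p*\mathbb{Z}$, the kernel of the map onto $\mathbb{Z}/p$ with $a\mapsto 1$, $b\mapsto 0$ is torsion-free, hence free by Kurosh, and maximal since its index $p$ is prime. So even a successful union construction could end at a finite-index $M$. In (b) and (c) your $M$ contains torsion or $\mathbb{Z}^2$, so that argument is not available at all, and the ``density argument'' you invoke was never established. Both parts also inherit the gap in the inductive step.

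\textbf{How the paper proceeds.} It avoids element-by-element control entirely. For (a), Linton's one-relator hierarchy \cite{linton} writes $\Gamma$ as a non-ascending HNN extension of $H$ over free subgroups, with an acylindrical (hence boundary-topologically-free) action on the Bass--Serre tree. A free $H$-action with infinitely many orbits is extended to a highly transitive $\Gamma$-action \cite{fmms}, and one takes a point stabilizer. It is maximal by $2$-transitivity and of infinite index because the set is infinite. It is free because it meets every conjugate of $H$ trivially, so it acts freely on the tree.

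For (b) and (c) the paper pulls back maximal subgroups from quotients: a Tarski monster quotient of $\langle a,b\mid [a,b]^p\rangle$, and the quotient of $\langle a,t\mid [a,tat^{-1}]\rangle$ by $\langle\langle a^2\rangle\rangle$. Your example $\mathbb{Z}^2*\mathbb{Z}$ for (c) can be handled the same way. It is an HNN extension of $\mathbb{Z}^2$ over trivial subgroups, which gives a free maximal subgroup of infinite index. Pulling back a maximal subgroup of infinite index from $(\mathbb{Z}/2)^2*\mathbb{Z}$ gives one containing $\langle a^2,b^2\rangle\cong\mathbb{Z}^2$.
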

      
       Let us point out that if a group is non-cyclic and has an infinite center, then its maximal subgroup cannot be non-Abelian free. Therefore, the one-relator groups such as  $\langle x, y \ | \ x^p = y^q \rangle , p, q\geq 2, (p,q) = 1$ (the torus knot groups), $BS(n,n) = \langle a, b \ | \ ab^na^{-1}=b^n\rangle , n\geq 1$ will have no free maximal subgroup. Among center-free one-relator groups, we also have examples with a non-free maximal subgroup, such as $\langle a, b, c \ | \ c^2 = 1\rangle $ and $BS(n,-n) = \langle a, b \ | \ ab^na^{-1}=b^{-n}\rangle , n\geq 2$ (see Section 4). Regarding the remaining non-solvable Baumslag-Solitar groups $BS(p,q), |p|, |q|\geq 2, |p|\neq |q|$, we will observe that they admit free maximal subgroups. 

      \medskip 

      The richness of a group with maximal subgroups may manifest itself in the smallness of its Frattini subgroup. This has been shown by Platonov \cite{platonov} and Wehrfritz \cite{weh} for finitely generated linear groups \cite{platonov} , by Ivanov  for finitely generated subgroups of mapping class groups \cite{iva87}, and by I.Kapovich for non-elementary word hyperbolic groups \cite{ka2001}. Gelander and Glasner \cite{ggc} extend these  results to countable  (not necessarily finitely generated) setting by a unified method (in the third case, proving finiteness of the Frattini subgroup for all non-elementary convergence groups). The smallness of the Frattini subgroup can also be viewed as part of the Variety Problem. We show the following theorem for one-relator groups.

    \begin{thm} \label{thm:main3} The Frattini subgroup of a non-solvable one-relator group is trivial.
    \end{thm}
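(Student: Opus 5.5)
We show that for every nontrivial $g\in\Gamma$ there is a maximal subgroup of $\Gamma$ not containing $g$. Since the Frattini subgroup is the intersection of all maximal subgroups, this gives $\Phi(\Gamma)=1$. The argument follows the case division used for Theorem \ref{thm: main}.

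\textbf{Three or more generators.} Here $\Gamma$ is large, and we use the property of large groups recorded in Proposition \ref{prop:quotientoflarge}. Fix a finite index subgroup $H\le\Gamma$ and an epimorphism $\pi:H\to F_2$. Given $g\neq 1$, replace $g$ by a suitable power $g^k\in H$. If $\pi(g^k)\neq 1$, we use the fact that $F_2$ has trivial Frattini subgroup, and more strongly that it admits profinitely dense free subgroups avoiding any given nontrivial element. We then construct a subgroup of $\Gamma$ that is not contained in any proper finite index subgroup and avoids $g$, and extend it by Zorn's lemma to a maximal subgroup avoiding $g$.

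If instead $g^k\in\ker\pi$ for every admissible $k$, we change the epimorphism. Free products, and the largeness produced by Magnus--Moldavanskii type arguments, give many epimorphisms onto $F_2$, and we would try to choose one on which $g$ survives.

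\textbf{Two generators.} We use Proposition \ref{prop:osin}. If $\Gamma$ is acylindrically hyperbolic, Gelander--Glasner type results (or Kapovich's argument adapted to this setting) show that $\Phi(\Gamma)$ is finite. A finite normal subgroup of a one-relator group must be trivial: in the torsion-free case this is automatic, and in the torsion case $\Gamma$ is hyperbolic by Newman's spelling theorem and non-elementary, so its finite radical is trivial. Hence $\Phi(\Gamma)=1$.

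In the HNN-extension and mapping-torus cases, $\Gamma$ acts on a Bass--Serre tree. In the case analysis for Theorem \ref{thm: main}, the maximal subgroups of infinite index arise as maximal subgroups $M$ containing a dense free subgroup built from elements that are hyperbolic on the tree. Conjugates of $M$ are again maximal. So it suffices to show that the normal subgroup $N=\bigcap_{\gamma}\gamma M\gamma^{-1}$ is trivial, or at least that it meets $\Phi$ trivially. A nontrivial normal subgroup of a non-solvable group of this kind contains elements that act minimally on the tree. We would then use freedom in the Zorn construction, for instance choosing among continuum many starting dense free subgroups, to arrange that a given $g$ is excluded.

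\textbf{Main obstacle.} The hard step is making the Zorn construction avoid a prescribed element, because Zorn's lemma gives no control over what gets added. My first attempt would be to fix $g$ and find a finite quotient or action in which $g$ acts nontrivially, so that one can work with the stabiliser of a point, or of an end of the tree, not fixed by $g$. Alternatively, one can replace maximal subgroups by the intersection of the maximal subgroups containing conjugates of $F$. This intersection is a normal subgroup $N$, and the goal is to show $N=1$ using the facts that one-relator non-solvable groups have no nontrivial normal amenable subgroups in these cases and that $N$ must be small.
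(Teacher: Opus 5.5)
\textbf{There is a genuine gap: the proposal does not yet prove the statement in any case except possibly the acylindrically hyperbolic one.}

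\textbf{The Zorn step.} Every case rests on extending a subgroup that avoids $g$ to a maximal subgroup that still avoids $g$. As you note yourself, Zorn's lemma cannot deliver this. Profinite density of the starting subgroup only forces the resulting maximal subgroup to have infinite index; it gives no control over which elements are absorbed. So the $\geq 3$-generator case, including the fallback ``change the epimorphism onto $F_2$,'' is unproved. So are the HNN and mapping-torus cases, including ``use freedom among continuum many dense free subgroups.''

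The property that every maximal subgroup containing $D$ does inherit is not ``$g\notin D$'' but ``$D$ is proper and $\langle D,g\rangle=G$.'' If that holds, then $g\notin M$ for every maximal $M\supseteq D$. This is the non-generator characterisation of $\Phi(G)$, and the paper uses it in the strictly ascending case:
\begin{itemize}
\item[$\cdot$] Assuming $\Phi(G)\neq 1$, it uses $\Phi(G)\le\ker\chi$ to move a Frattini element into $P=\varphi(F)$.
\item[$\cdot$] It conjugates by some $v$ with $\langle P,v\rangle\cong P*\langle v\rangle$ to obtain $u\in\Phi(G)$.
\item[$\cdot$] It then shows that $D=\langle P,ut\rangle\cong F_{r+1}$ is proper, since $G$ is $2$-generated, while $\langle D,u\rangle=G$. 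This contradicts $u\in\Phi(G)$.
\end{itemize}

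\textbf{The core reduction fails in general.} Consider a unimodular GBS group with infinite center, such as $BS(n,n)$ with $n\geq 2$, or $\langle x,y\mid x^p=y^q\rangle$ with $p,q\geq 2$ coprime. Let $M$ be a maximal subgroup and $z$ a central element. If $z\notin M$, then $G=M\langle z\rangle$, so $M$ is normal of prime index. Hence every infinite-index maximal subgroup contains $Z(G)\neq 1$, and its core is never trivial. This center also acts trivially on the Bass--Serre tree. That contradicts your claim that a nontrivial normal subgroup contains elements acting minimally, i.e.\ hyperbolically, on the tree.

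So finite-index maximal subgroups must enter the argument. The paper does this by combining Gelander--Glasner, which gives $\Phi(G)\le K=\ker(G\curvearrowright T)$, with Levitt's results:
\begin{itemize}
\item[$\cdot$] In the non-unimodular case, $K=1$.
\item[$\cdot$] In the unimodular case, $\Phi(G)\le\ker\tau$, where Levitt's $\tau$ maps $G$ onto $\mathbb{Z}$ or $D_\infty$, both of which have trivial Frattini subgroup. Since $\ker\tau$ contains no nontrivial elliptic element and every element of $K$ is elliptic, $\Phi(G)=1$.
\end{itemize}

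\textbf{Smaller points.}
\begin{itemize}
\item[$\cdot$] You do not need your torsion and $\geq 3$-generator arguments at all. Newman's theorem gives $\Phi=1$ directly for one-relator groups with torsion and at least two generators, and for torsion-free ones with at least three generators.
\item[$\cdot$] Finiteness of $\Phi$ for acylindrically hyperbolic groups is Hull's theorem, not a Gelander--Glasner result.
\item[$\cdot$] ``Non-elementary hyperbolic implies trivial finite radical'' is false in general; $F_2\times\mathbb{Z}/2$ is a counterexample. This is moot once you use Newman's theorem.
\end{itemize}
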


     $\textbf{Acknowledgment:}$ The first author thanks T.Amrutam for informing about \cite{abgm}.  The second author thanks J. P. Mutanguha for a helpful discussion and suggestions.

      {\bf AI Disclosure:} We have used GPT-5.6 Luna for literature search and clarification of the results from the literature.

     \section{The case of $n \geq 3$ generators}

	For $n$-generated one-relator groups with $n \geq 3$, we use the well-known Baumslag-Pride theorem which states that a group of deficiency at least 2 is a large group. We demonstrate that finitely generated large groups have a non-virtually-solvable linear quotient. As a by-product of this observation, we also show that finitely generated large groups have infinite girth, thus answering the question in $\cite{azer2026}$. In a recent work \cite{abgm}, the authors obtain the same result independently with a very similar method.

	\begin{prop} \label{prop:quotientoflarge}
		Let $G$ be a finitely generated large group. Then it has a non-virtually-solvable linear quotient.
	\end{prop}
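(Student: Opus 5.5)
Recall that $G$ is \emph{large} if it has a finite index subgroup $H$ admitting a surjection $\phi : H \to F_2$ onto a non-abelian free group. Since $G$ is finitely generated, so is $H$. The plan is to use an induced (wreath-product / permutation) construction to turn a linear, non-virtually-solvable quotient of $H$ into one of $G$. First choose a faithful representation $\rho_0 : F_2 \to SL(2,\mathbb{Z})$; for instance, send the generators to
\[
\begin{pmatrix} 1 & 2 \\ 0 & 1\end{pmatrix}, \qquad \begin{pmatrix} 1 & 0 \\ 2 & 1\end{pmatrix}
\]
(Sanov). Then $\rho = \rho_0\circ\phi : H \to SL(2,\mathbb{Z})$ has image isomorphic to $F_2$, so $\rho(H)$ is not virtually solvable.

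Next, replace $H$ by its normal core $N = \bigcap_{g\in G} gHg^{-1}$, which is normal and of finite index in $G$. The image $\phi(N)$ has finite index in $F_2$, hence is a non-abelian free group of finite rank. So $N$ also surjects onto a non-virtually-solvable linear group via $\rho|_N$. Now form the induced representation $\mathrm{Ind}_H^G \rho$, of dimension $2[G:H]$. Concretely, $G$ acts on the finite coset set $G/H$, and $\mathrm{Ind}_H^G\rho$ realizes $G$ as a subgroup of $SL(2,\mathbb{Z})\wr \mathrm{Sym}(G/H)$, sitting inside $GL(2[G:H],\mathbb{Z})$ as block-monomial matrices. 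Let $\Phi : G \to GL(2[G:H],\mathbb{Z})$ denote this map. Its image $\Phi(G)$ is a linear group, a quotient of $G$.

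It remains to see that $\Phi(G)$ is not virtually solvable. This is the main step, though it is short. The restriction of $\mathrm{Ind}_H^G\rho$ to $H$ contains $\rho$ as a direct summand: $H$ stabilizes the trivial coset, so the corresponding $2\times 2$ block is $H$-invariant and $H$ acts on it via $\rho$. Projecting onto that block gives a surjection from $\Phi(H)$ onto $\rho(H)\cong F_2$. Suppose $\Phi(G)$ were virtually solvable. Then its finite index subgroup $\Phi(H)$ would be virtually solvable, and hence so would its quotient $\rho(H)\cong F_2$, which is a contradiction. (Virtual solvability passes to subgroups and quotients.) Hence $\Phi(G)$ is a non-virtually-solvable linear quotient of $G$.

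The only delicate point is making sure the block restriction really is a homomorphic image of $\Phi(H)$. Concretely, the map $\Phi(h)\mapsto\rho(h)$ must be well defined, and this holds because the block is read off from the matrix $\Phi(h)$ itself. The normal core step is not even needed; the direct summand argument suffices. Finite generation of $G$ is not essentially used beyond the setting; it ensures the quotient is a finitely generated linear group, which is what is needed later to apply the Margulis--Soifer theorem.
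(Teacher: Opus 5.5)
Your proof is correct, and it takes a more direct route than the paper's, although both end up with the same quotient.

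The paper passes to the normal core $K=\mathrm{Core}_G(H)$. It sets $N=\ker(\phi|_K)$ and $N_0=\mathrm{Core}_G(N)$, and embeds $K/N_0$ into a finite product $\prod_{i=1}^d K/N^{g_i}\cong F_r^d$ of free groups, so $K/N_0$ is linear. It observes that $K/N_0$ surjects onto $F_r$. It then concludes that $Q=G/N_0$ is linear by citing, as a black box, the standard fact that virtually linear groups are linear.

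That fact is proved by exactly the induced-representation construction you use. You have effectively inlined it and applied it to $H$ itself, correctly noting that induction does not require $H$ to be normal. The trivial-coset block then gives the surjection $\Phi(H)\to\rho(H)\cong F_2$ directly, with no core and no product of free groups.

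The two quotients are literally the same group. The kernel of $\mathrm{Ind}_H^G\rho$ is $\bigcap_{g\in G} g(\ker\rho)g^{-1}=\mathrm{Core}_G(\ker\phi)$. This equals the paper's $N_0=\mathrm{Core}_G(\ker\phi)\cap K$, because $\mathrm{Core}_G(\ker\phi)\le\mathrm{Core}_G(H)=K$.

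Your version is self-contained and gives an explicit integral representation $G\to GL(2[G:H],\mathbb{Z})$. The paper's version instead records the structural fact that $Q$ is virtually a subgroup of a direct product of free groups.
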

	\begin{proof}
		Let $H$ be a finite-index subgroup of $G$ surjecting onto the free group $F_2$ on two generators. Now let $K = \text{Core}_G(H) \unlhd G$. Since $H$ has finite index in $G$, we have that $K$ also has finite index in $G$, say $[G:K]=d$. Now let $\pi :H \to F_2$ be the surjection onto $F_2$. The image of $K$ has finite index since 
		\[
		[F_2 : \pi(K)] = [\pi(H): \pi(K)] \leq [H:K]<\infty.
		\]
		
		Then the image of $K$ in $F_2$ is a free group of rank at least 2. Indeed, by the Nielsen-Schreier formula, 
		\[
		\text{rank}(\pi(K)) = 1 + [F_2 : \pi(K)] (2-1) = 1 + [F_2 : \pi(K)] \geq 2.
		\]
		Thus, $K$ surjects onto a free group $F_r$, $r \geq 2$. Let $\theta: K \to F_r$ be that surjection. Let $N = \text{ker}(\theta)$ and $N_0 = \text{Core}_G(N) = \bigcap_{g \in G} g N g^{-1} = \bigcap_{g \in G} N^{g}$. Since $K$ has finite index in $G$ and $N \unlhd K$, we can make $N_0$ a finite intersection. \\

		Now set $Q : = G/N_0$. We show that $Q$ is virtually linear, hence linear. 
		The group $K/N_0$ has finite index in $Q$ since $K$ has finite index in $G$. Now notice that $K/N_0$ embeds into the finite product $\prod_{i=1}^{d} K/N^{g_i}$. Indeed, if you consider the homomorphism 
		\[
		f: K \to \prod_{i=1}^{d} K/N^{g_i} \; \; \text{defined by} \; \; f(k) = (kN^{g_1},...,kN^{g_d})
		\]
		the kernel is precisely $N_0$ and thus $K/N_0 \hookrightarrow \prod_{i=1}^{d} K/N^{g_i}$. \\
		Since $K \unlhd G$, conjugation by $g_i$ induces an isomorphism $K/N^{g_i} \cong K/N \cong F_r$. Therefore, we see that $K/N_0$ embeds into a finite product of free groups and hence is a linear group. Thus, $Q$ is a virtually linear group. However, a virtually linear group is linear. Moreover, since $N_0 \leq N \leq K$, $K/N_0$ surjects onto $K/N$ which is a free group. Hence, $K/N_0$ cannot be virtually solvable. Since $K/N_0$ has finite index in $Q$ and is not virtually solvable, $Q$ cannot be virtually solvable either. Hence, we have a non-virtually-solvable finitely generated linear quotient $Q$ of the \textit{large group} $G$.
	\end{proof}
	
	\begin{cor}
		Let $G$ be an $n$-generated one-relator group where $n \geq 3$. Then $G$ has a maximal subgroup of infinite index. 
	\end{cor}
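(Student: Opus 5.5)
The plan is to combine the Baumslag--Pride theorem, Proposition \ref{prop:quotientoflarge}, and the Margulis--Soifer theorem, and then pull back a maximal subgroup along a surjection.

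First, let $G = \langle x_1,\dots,x_n \mid r\rangle$ with $n\geq 3$. This presentation has deficiency $n-1\geq 2$, so by the Baumslag--Pride theorem $G$ is large. Since $G$ is finitely generated, Proposition \ref{prop:quotientoflarge} gives a surjection $\varphi: G \to Q$ onto a linear group $Q$ that is not virtually solvable. Moreover, $Q$ is finitely generated, being a quotient of $G$.

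Second, we apply the Margulis--Soifer theorem to $Q$. As a finitely generated, non-virtually-solvable linear group, $Q$ has a maximal subgroup $M$ of infinite index.

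Third, we pull back and set $\widetilde M = \varphi^{-1}(M)$. This is a proper subgroup of $G$, since $\varphi$ is onto and $M\neq Q$. By the correspondence theorem for subgroups containing $\ker\varphi$, any subgroup $L$ with $\widetilde M \le L \le G$ contains $\ker \varphi$. Hence $L = \varphi^{-1}(\varphi(L))$, and maximality of $M$ forces $\varphi(L)\in\{M,Q\}$, i.e.\ $L\in\{\widetilde M, G\}$. So $\widetilde M$ is maximal in $G$. Finally, $[G:\widetilde M] = [Q:M] = \infty$, because cosets of $\widetilde M$ in $G$ correspond bijectively to cosets of $M$ in $Q$.

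No step is a real obstacle once the earlier proposition is granted. The only point needing care is that the Margulis--Soifer theorem requires a \emph{finitely generated} linear group, and this holds automatically for quotients of $G$. I would also remark that the deficiency hypothesis needs $n\ge 3$, which is why $n=2$ must be treated separately.
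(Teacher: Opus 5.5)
Your proposal is correct and follows the paper's proof: largeness via Baumslag--Pride (deficiency $n-1\geq 2$), the non-virtually-solvable finitely generated linear quotient from Proposition~\ref{prop:quotientoflarge}, the Margulis--Soifer theorem, and then pulling back the maximal subgroup, with maximality from the correspondence theorem and infinite index from the coset bijection. Your explicit citation of Baumslag--Pride and your remark on the finite generation of $Q$ only spell out steps the paper leaves implicit.
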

	
	\begin{proof}
		Let $q : G \to Q$ be the canonical surjection onto the quotient $Q$ from the Proposition 2.1. Since $Q$ is non-virtually solvable linear group, it has a maximal subgroup of infinite index by Margulis-Soifer theorem (see $\cite{margulis-soifer}$). Let us denote a maximal subgroup of $Q$ with infinite index by $M$. Then the pullback $\tilde{M} : = q^{-1}(M)$ of $M$ is a maximal subgroup of $G$ with infinite index. Note that $\text{ker}(q) \leq q^{-1}(M) \leq L$. Suppose $q^{-1}(M) < L < G$ for some $L < G$. Notice that if $q(L) = q(G)$ then for any $g \in G$, there exists $\ell \in L$ such that $q(g) =q(\ell)$ and thus $g \in L$ since $g \ell^{-1} \in \text{ker}(q) \leq L$. Hence, applying the quotient map $q$, we obtain that $M < q(L) < Q$ which contradicts the maximality of $M$. \\ Moreover, the map $G/ \tilde{M} \to Q/M$ given by $g \tilde{M} \mapsto q(g) M$ is a bijection. Hence, $[G: \tilde{M}]= [Q:M] = \infty$.
	\end{proof}
	
	Now as another corollary, we give a short answer to the question posed 	 in $\cite{azer2026}$
	\begin{cor} \label{cor:girth} 
		Suppose $G$ is a finitely generated large group. Then $\text{girth}(G) = \infty$. 
	\end{cor}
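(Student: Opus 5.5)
Recall that the girth of a finitely generated group $G$ is the supremum, over all finite generating sets $S$ of $G$, of the length of the shortest nontrivial relation among elements of $S$, i.e. the girth of the Cayley graph $\mathrm{Cay}(G,S)$. The plan is to reduce the statement to the corresponding known fact for linear groups, using the quotient supplied by Proposition \ref{prop:quotientoflarge}. The key observation is that girth cannot decrease when passing to a quotient. Let $q: G \to Q$ be a surjection and $S$ a finite generating set of $Q$. Pick any lift $\tilde S \subset G$ of $S$ and let $A$ be a finite generating set of $G$. Then the sets $\tilde S \cup \{a^{N}\ldots\}$ are not quite right, so instead I will use the following: for each generating set $S$ of $Q$ of girth at least $k$, I will build a finite generating set $\tilde S$ of $G$ with $q|_{\tilde S}$ injective and $q(\tilde S)$ essentially $S$. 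Any nontrivial relation of length $<k$ in $\tilde S$ maps to a word in $S$ of length $<k$. If that word is freely nontrivial, it is a nontrivial relation in $Q$, which contradicts the girth of $S$.

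The lifting step is where the care is needed, since a generating set of $G$ must also generate $\ker q$. I would handle this as follows. Let $A=\{a_1,\dots,a_m\}$ generate $G$, and let $S=\{s_1,\dots,s_n\}$ generate $Q$ with girth $>k$. Write each $q(a_i)$ as a word $w_i(S)$. Choose lifts $\tilde s_j$ of $s_j$ and set $c_i = a_i\, w_i(\tilde s)^{-1} \in \ker q$. Then $\tilde S' = \{\tilde s_j\} \cup \{\tilde s_1 c_i\}$ generates $G$, because $c_i = \tilde s_1^{-1}(\tilde s_1 c_i)$ and $a_i = c_i w_i(\tilde s)$. Its image in $Q$ is $S$ with $s_1$ repeated $m+1$ times, so the repetition introduces the short relations $t_1 t_2^{-1}$. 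To avoid this, I would first replace $S$ by a generating set $S \cup \{u_1,\dots,u_m\}$ of $Q$ that still has large girth, where the $u_i$ are distinct new elements, and lift $u_i$ to $\tilde u_i c_i$. Producing such $u_i$ while keeping the girth large is the main obstacle.

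I would overcome it through the structure of the linear quotient. By Proposition \ref{prop:quotientoflarge}, $Q$ is a finitely generated non-virtually-solvable linear group. For such groups, infinite girth is known, and so is a stronger "free extension" property: using a Tits ping-pong / Schottky argument in the style of Akhmedov and of Margulis–Soifer, any finite generating set can be modified, after adjoining or replacing elements, into one whose short words are nontrivial. Concretely, I would take ping-pong elements $x_1,\dots,x_{n+m}$ generating a free subgroup of $Q$, and choose generators of the form $s_j x_j^{N}$ together with $x_j^{N}$ and $u_i = x_{n+i}^N$, with $N \gg k$. Words of length $<k$ in these generators then play ping-pong and are nontrivial unless freely trivial. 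This gives $Q$ generating sets of arbitrarily large girth, with enough free room to absorb the lifts of the $c_i$.

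Finally, I would assemble the argument. Since every short word in the lifted generating set maps to a freely nontrivial short word in the free-like generating set of $Q$, that generating set of $G$ has girth $\ge k$. As $k$ was arbitrary, $\mathrm{girth}(G)=\infty$.
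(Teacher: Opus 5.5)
Your overall architecture is the paper's. You pass to the non-virtually-solvable linear quotient $Q$ of Proposition~\ref{prop:quotientoflarge}, use Akhmedov's girth alternative to get $\mathrm{girth}(Q)=\infty$, and pull this back to $G$. The paper does the last step by quoting Proposition~1.1 of \cite{azer2005}: if $\mathrm{girth}(G/N)=\infty$ and $G/N\not\cong\mathbb{Z}$, then $\mathrm{girth}(G)=\infty$. You instead try to prove it. Your reduction is correct: it suffices to have, for every $k$, a generating set $S\cup\{u_1,\dots,u_m\}$ of $Q$ of girth $\ge k$ in which each $u_i=v_i(S)$ is redundant. You must then lift $u_i$ specifically to $v_i(\tilde S)\,c_i$, not to an arbitrary lift times $c_i$, so that $c_i$ lies in the subgroup generated by the lifted set.

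The gap is the construction of such sets, which you rightly flag as the main obstacle but do not resolve.
\begin{itemize}
\item The set $T=\{s_jx_j^N,\ x_j^N,\ x_{n+i}^N\}$ generates all of $Q$, which is not free. So ping-pong cannot be what makes short words in $T$ nontrivial.
\item Concretely, $s_j=(s_jx_j^N)(x_j^N)^{-1}$ has length $2$ in $T$. The length-$4$ word $(s_jx_j^N)\,x_j^N\,(s_jx_j^N)^{-1}x_j^{-N}=s_jx_j^Ns_j^{-1}x_j^{-N}$ is a relation whenever $s_j$ commutes with $x_j^N$, and nothing in your choice of the $x_j$ excludes this.
\item The ``free extension property'' you attribute to linear groups is neither cited nor proved.
\end{itemize}

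The step in fact needs no linearity. Because $Q$ is non-cyclic, pick a generating set $S$ with $|S|\ge 2$ and $\mathrm{girth}(Q,S)>kL$. Adjoin $u=w(S)$, where $w$ is a word of length $L\ge 2k$ such that $u^{-1}w$ is a $C'(1/6)$ relator in $F(S\cup\{u\})$; for example $w=s_1s_2s_1s_2^2\cdots s_1s_2^{p}$ with $p$ large. Now let $W$ be a relation of length $<k$ in $S\cup\{u\}$. Substituting $u\mapsto w$ gives a word of length $<kL$ that is trivial in $Q$, hence freely trivial. So $W$ lies in the normal closure of $u^{-1}w$, and Greendlinger's lemma gives $|W|>(L+1)/2>k$, a contradiction. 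Iterating adds $u_1,\dots,u_m$.

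This is exactly where $Q\not\cong\mathbb{Z}$ enters: for $|S|=1$, any $u$ commutes with $s$, giving a relation of length $4$. Alternatively, simply cite \cite{azer2005} as the paper does.
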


	\begin{proof}
		Akhmedov proved in $\cite{azer2005}$ that girth alternative holds for linear groups. Hence, the group $Q$ as in Proposition $2.1$ has infinite girth. Moreover, Proposition 1.1 of the same paper states that if $G$ is a finitely generated group with $N \unlhd G$ such that $\text{girth}(G/N)= \infty$ and $G/N \ncong \mathbb{Z}$ , then $\text{girth}(G)= \infty$. Since the group $Q$ is non-virtually-solvable, we have that $\text{girth}(G)= \infty$. 
	\end{proof}

     \section{ The case of two generators}

	To prove that every $2$-generated one-relator group is either virtually solvable or has a maximal subgroup of infinite index, we refer to the following characterization of the 2-generated one-relator groups due to A. Minasyan and D. Osin \cite{minos}. 
	\begin{prop} \label{prop:osin} 
		Let $G$ be a group with two generators and one defining relator. Then at least
		one of the following holds:
		\begin{enumerate}
			\item[(i)] $G$ is acylindrically hyperbolic;
			
			\item[(ii)] $G$ contains an infinite cyclic $s$-normal subgroup. More precisely,
			either $G$ is infinite cyclic or it is an HNN-extension of the form
			\[
			G = \langle a,b,t \mid a^t = b, r=1\rangle
			\]
			of a one-relator group
			\[
			H = \langle a,b \mid r\rangle
			\]
			with non-trivial center, so that $a^k=b^l$ in $H$ for some
			$k,l \in \mathbb{Z}\setminus\{0\}$. In the latter case $H$ is
			finitely generated free-by-cyclic and contains a finite index normal
			subgroup splitting as a direct product of a finitely generated free group
			with an infinite cyclic group;
			
			\item[(iii)] $G$ is isomorphic to the mapping torus of an injective endomorphism
			of a finitely generated free group.
		\end{enumerate}
		
		Moreover, the possibilities \textup{(i)} and \textup{(ii)} are mutually
		exclusive.
	\end{prop}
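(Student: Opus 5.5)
Since this is the Minasyan--Osin characterization, I would follow their Bass--Serre route: obtain a splitting of $G$ as an HNN-extension, let $G$ act on the Bass--Serre tree $T$, and sort the cases by how edge stabilizers intersect.

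\textbf{Step 1 (splitting).} Write $G=\langle x,y\mid R\rangle$ with $R$ cyclically reduced. If $G$ is cyclic there is nothing to prove, so assume it is not. After a Nielsen automorphism of $F(x,y)$ we may assume the exponent sum of $R$ in one generator, say $t$, is zero; if no such change is needed the argument is the same. The Magnus--Moldavanskii rewriting then presents $G$ as an HNN-extension
\[
G=\langle H,t\mid A^t=B\rangle
\]
of a one-relator group $H$ whose relator is shorter. The associated subgroups $A,B$ are Magnus subgroups, so they are free and finitely generated.

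\textbf{Step 2 (ascending case).} Suppose the HNN-extension is ascending, i.e.\ $A=H$ or $B=H$. Then $H$ is free: a Magnus subgroup equal to the whole one-relator group forces the relator to be primitive. Hence $G$ is the mapping torus of an injective endomorphism of a finitely generated free group, which is case (iii).

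\textbf{Step 3 (non-ascending case).} Now the action of $G$ on $T$ is minimal and non-elementary. I would use the Minasyan--Osin criterion: such an action yields acylindrical hyperbolicity provided some edge stabilizer $E$ is not weakly malnormal. Concretely, it suffices to find $g\in G$ with $E\cap E^g$ finite, which here means trivial since $G$ is torsion-free or cyclic-by-something. That gives case (i). If no such $g$ exists, then all pairs of edge stabilizers intersect nontrivially. Since edge stabilizers are free, I expect these intersections to generate a cyclic subgroup. The plan is to use the fact that finitely generated subgroups of one-relator groups intersect well (for example, Howson-type behaviour for Magnus subgroups) to force $A$ and $B$ to be cyclic, generated by $a$ and $b$, with $\langle a\rangle\cap\langle a\rangle^g$ infinite for all $g$. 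Then $\langle a\rangle$ is an infinite cyclic $s$-normal subgroup. Its commensurator is all of $G$, so in $H$ some powers of $a$ and $b$ become central, and one obtains a relation $a^k=b^l$.

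\textbf{Step 4 (structure of $H$).} A one-relator group with nontrivial center is classified by Murasugi and Pietrowski: it is free-by-cyclic, and virtually $F\times\mathbb{Z}$ with $F$ finitely generated free. This yields case (ii).

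\textbf{Step 5 (exclusivity).} By Osin, an acylindrically hyperbolic group has no infinite amenable $s$-normal subgroup. An infinite cyclic $s$-normal subgroup is such a subgroup, so (i) and (ii) cannot both hold.

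\textbf{Main obstacle.} The hardest step is Step 3, upgrading ``all edge-stabilizer intersections are nontrivial'' to ``the edge groups are cyclic and $s$-normal.'' I would first try the Minasyan--Osin lemma that, in a non-ascending HNN-extension whose edge groups are not weakly malnormal, the elements stabilizing long paths in $T$ form a subgroup normalized up to commensurability. I would then use freeness of the Magnus subgroups to show that this subgroup is cyclic.
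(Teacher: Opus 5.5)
The paper does not prove this proposition; it quotes it from Minasyan--Osin \cite{minos}, so your reconstruction has to stand on its own. Its outer shell is fine. Take the Magnus--Moldavanskii rewriting, with $x_i=t^ixt^{-i}$, $H=\langle x_m,\dots,x_M\mid R'\rangle$, $A=\langle x_m,\dots,x_{M-1}\rangle$ and $B=tAt^{-1}=\langle x_{m+1},\dots,x_M\rangle$. The Freiheitssatz makes $A$ free on these letters, so $A=H$ (or $B=H$) forces $H$ to be finitely generated free, which gives (iii). If $A=\langle a\rangle$ is commensurated in $G$, then $\langle a\rangle\cap\langle b\rangle\neq 1$ gives $a^k=b^l$ with $a^k$ central in $H=\langle a,b\rangle$, and the cited results on one-relator groups with centre give (ii). Exclusivity does follow from Osin's result on $s$-normal subgroups of acylindrically hyperbolic groups.

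The gap is in Step 3, which carries the actual content of the proposition. Put $n=M-m$. When $n=1$, $A=\langle a\rangle$ is infinite cyclic, so failure of weak malnormality is literally the statement that $\langle a\rangle$ is $s$-normal, and that case is immediate. Everything therefore rests on the implication: \emph{non-ascending and $n\ge 2$ $\Rightarrow$ $A\cap gAg^{-1}=1$ for some $g\in G$}. For this you offer only an expectation. Freeness of edge groups and Howson-type finite generation cannot give it. Already $A\cap tAt^{-1}=A\cap B$ contains the Magnus subgroup $\langle x_{m+1},\dots,x_{M-1}\rangle$, free of rank $n-1$. So intersections of pairs of edge stabilizers are nontrivial for $n\ge 2$ and non-cyclic for $n\ge 3$. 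Knowing that such intersections are finitely generated says nothing about whether \emph{some} conjugate meets $A$ trivially. The path-stabilizer lemma you propose to try (stabilizers of long paths forming a subgroup normalized up to commensurability) is neither standard nor proved. Moreover, even a cyclic ``core'' of intersections would not by itself force $A$ to be cyclic, i.e.\ $n=1$. What is missing is a genuinely one-relator-specific input on intersections of conjugates of Magnus subgroups. Results of the type proved by Bagherzadeh and Collins say, roughly, that $M\cap hMh^{-1}$ is cyclic for a Magnus subgroup $M$ and $h\notin M$. You would need such a result together with an argument that, for $n\ge 2$, produces an explicit $g$ with $A\cap gAg^{-1}=1$. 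Without it, your proof reaches no conclusion for a non-ascending splitting whose edge groups have rank at least $2$.
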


    Now we remind a reader the notion of a profinitely dense subgroup. 
	\begin{defn}
		Let $G$ be a group and $H \leq G$. We say that $H$ is a profinitely dense in $G$ if for every homomorphism $\phi : G \to Q$ to a finite group $Q$, we have 
		\[
		\phi(H) = \phi(G).
		\]
	\end{defn}
	
    Profinitely dense subgroups have been used crucially in the original work of Margulis and Soifer \cite{ms2} as a tool to produce maximal subgroups of infinite index. By Zorn's lemma, we know that every proper subgroup of a finitely generated group is contained in a maximal subgroup. If a proper subgroup is also profinitely dense, then a maximal subgroup containing it must necessarily be of infinite index. For convenience of the reader, we include the following well-known proposition from \cite{ms2}.

	\begin{prop} \label{prop:profinitedense}
		Let $D$ be a proper profinitely dense subgroup of a finitely generated group $G$. Then every maximal subgroup containing $D$ has infinite index.
	\end{prop}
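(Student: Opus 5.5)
We argue by contradiction: assume that some maximal subgroup $M$ of $G$ containing $D$ has finite index $[G:M]=n<\infty$, and derive a contradiction with the profinite density of $D$. Note first that maximal subgroups containing $D$ exist. Since $G$ is finitely generated, Zorn's lemma applies to the poset of proper subgroups containing $D$: the union of a chain of proper subgroups is again proper, because a union containing a finite generating set would already contain it at some stage of the chain. So the statement is not vacuous, although existence is not what needs proof.

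The key step is to convert the finite index subgroup $M$ into a homomorphism to a finite group. Let $G$ act by left multiplication on the finite coset space $G/M$. This gives a homomorphism
\[
\phi\colon G\to \mathrm{Sym}(G/M)\cong S_n ,
\]
whose kernel is $\mathrm{Core}_G(M)=\bigcap_{g\in G}gMg^{-1}$. This kernel is contained in $M$ and is normal of finite index in $G$. Since $\phi$ factors through the quotient $G/\mathrm{Core}_G(M)$, we have $\phi^{-1}(\phi(M))=M$. Alternatively, one can note that the stabilizer of the coset $M$ under this action is exactly $M$.

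Now we apply profinite density. Since $D\le M$, we get $\phi(G)=\phi(D)\le\phi(M)\le\phi(G)$, so $\phi(M)=\phi(G)$. Pulling back gives $G=\phi^{-1}(\phi(G))=\phi^{-1}(\phi(M))=M$. This contradicts the fact that a maximal subgroup is proper. Hence $[G:M]=\infty$.

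There is no serious obstacle in this argument. The only point needing care is the identity $\phi^{-1}(\phi(M))=M$, which holds because $\ker\phi=\mathrm{Core}_G(M)\le M$. Finite generation of $G$ is used only to guarantee, via Zorn's lemma, that maximal subgroups containing the proper subgroup $D$ exist; the index conclusion itself does not need it.
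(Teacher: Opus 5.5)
Your proof is correct and is essentially the paper's argument: both let $G$ act on the finite coset space $G/M$ and note that the kernel $\mathrm{Core}_G(M)$ lies in $M$. Both then use profinite density to force the image of $M$ to equal the image of $G$, which contradicts properness of $M$. Your side remark that finite generation is needed only for the existence of maximal subgroups containing $D$, not for the index conclusion, is also accurate.
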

	
	\begin{proof}
		Let $M$ be a maximal subgroup containing $D$. Suppose that $[G:M]=m$. Then $G$ acts on the set of left cosets of $M$ by the multiplication and induces the homomorphism $\rho :G \to \text{Sym}(G/M)$. Now let $C: = \text{Core}_G(M) = \ker(\rho)$. Hence, we have $G/C \cong \rho(G) \leq S_m$ and thus $G /C$ is a finite group. Now consider the natural projection $q :G \to G/C$. As $G/C$ is a finite group, we have $q(D) = q(G)$ by the profinite density of $D$. Then we have 
		\[
		q(D) \leq q(M) = M/C < G/C = q(G)
		\]
		which contradicts the fact that $q(D) = q(G)$. 
		
	\end{proof}

	\begin{prop}
		Every 2-generated one-relator group is either virtually solvable or has a maximal subgroup of infinite index.  
	\end{prop}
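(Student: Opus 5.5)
I will follow the trichotomy of Proposition \ref{prop:osin} and treat each case separately, producing in each non-solvable case either a non-virtually-solvable linear quotient (and then pulling back a Margulis--Soifer maximal subgroup, exactly as in the corollary for $n\geq 3$), or a proper profinitely dense subgroup, to which Zorn's lemma and Proposition \ref{prop:profinitedense} apply. The pullback of an infinite index maximal subgroup along a surjection is again maximal of infinite index, so it suffices to find a quotient with the desired property.

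\emph{Case (i).} If $G$ is acylindrically hyperbolic, I will invoke the known result that acylindrically hyperbolic groups contain maximal subgroups of infinite index. One route is that $G$ is SQ-universal, so it has a quotient that is a non-elementary hyperbolic group (via hyperbolically embedded subgroups and Dehn filling). In a non-elementary hyperbolic group one finds a proper profinitely dense free subgroup: Gelander--Glasner type results for convergence groups, or Kapovich's results. Alternatively, I will cite Gelander--Glasner directly for countable acylindrically hyperbolic groups. Acylindrically hyperbolic groups are never virtually solvable, so this case lands in the second alternative.

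\emph{Case (ii).} If $G$ is infinite cyclic it is solvable. Otherwise $G=\langle H,t\mid a^t=b\rangle$ with $H=\langle a,b\mid r\rangle$ having nontrivial center and $a^k=b^l$. Then $z=a^{kl'}$ (a suitable common power) is central in $H$. I will try to quotient $G$ by the normal closure of a suitable power of $a$, aiming for a linear quotient. Note that $a^t=b$ implies that $t$ conjugates $\langle a\rangle$ into the commensurable subgroup $\langle b\rangle$. If $|k|=|l|$, then $a^k$ generates a normal subgroup, and $G/\langle\langle a^k\rangle\rangle$ is an HNN extension of the virtually free group $H/\langle a^k\rangle$ over a finite group. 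Hence it is virtually free and linear, and it is non-virtually-solvable unless $G$ is one of the listed solvable groups. If $|k|\neq |l|$, I expect $G$ to behave like a generalized Baumslag--Solitar group, and I will instead map $G$ onto $BS(k,l)$-like quotients. For these I can either exhibit a linear non-solvable quotient directly (e.g.\ $BS(k,l)$ modulo a power, which becomes a virtually free amalgam) or build a profinitely dense free subgroup by hand.

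\emph{Case (iii).} $G$ is a mapping torus $F_n\ast_\phi$ of an injective endomorphism, with $n\le 2$ since $G$ is 2-generated with one relator. If $n=1$, $G$ is $\mathbb Z^2$, the Klein bottle group, or $BS(1,m)$, all of which are solvable. For $n=2$, such groups are residually finite (Borisov--Sapir) and in fact linear in many cases. I would try to use that ascending HNN extensions of free groups are either polycyclic-like or contain free subgroups, and find a proper profinitely dense subgroup, e.g.\ generated by two elements mapping onto the profinite completion. Realistically, I will split into the subcase where $\phi$ is an automorphism, where I would rely on linearity or hyperbolicity results (Brinkmann: hyperbolic if atoroidal, otherwise a relatively hyperbolic/acylindrically hyperbolic structure or a $\mathbb{Z}^2$-type quotient), and the non-surjective subcase, where Kapovich/Mutanguha show the group is hyperbolic or acylindrically hyperbolic, returning to case (i).

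I expect case (iii), and in particular the free-by-cyclic groups with polynomially growing monodromy that are not acylindrically hyperbolic, to be the main obstacle. For those I would first try to exhibit an explicit virtually free or linear non-solvable quotient by killing a suitable power of a fixed element of the monodromy.
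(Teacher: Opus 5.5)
\textbf{Case (ii) with $|k|\neq|l|$.} This is the main gap. There you only state intentions (``I expect\ldots'', ``I can either\ldots or\ldots''), and the linear-quotient strategy provably fails in this subcase. Take $H=\langle a,b\mid a^3=b^2\rangle$ and $G=\langle a,b,t\mid a^3=b^2,\ tat^{-1}=b\rangle\cong BS(2,3)$. This is a case (ii) group with $k=3$, $l=2$.

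\begin{itemize}
\item Suppose $\bar a$ has order $N$ in a finite quotient. Then $\bar a^2$ and $\bar a^3$ are conjugate, which forces $\gcd(N,6)=1$. Hence $\langle\bar a\rangle$ is normal with cyclic quotient, so every finite quotient of $BS(2,3)$ is metabelian.
\item Finitely generated linear groups are residually finite, so every linear quotient of $BS(2,3)$ is metabelian. Thus no non-virtually-solvable linear quotient exists. In particular, $BS(k,l)$ modulo a power of the generator is virtually cyclic when $\gcd(k,l)=1$.
\item A proper profinitely dense subgroup would have to surject onto the metabelianization $\mathbb{Z}[1/6]\rtimes\mathbb{Z}$. ``Build one by hand'' is therefore not a routine step, and you give no construction.
\end{itemize}

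What is missing, and what the paper supplies:
\begin{itemize}
\item Every non-cyclic case (ii) group is a generalized Baumslag--Solitar group (Kropholler; Button--Kropholler).
\item A case analysis on the cycles of a reduced GBS graph produces one of: a quotient $F_2$ (at least two cycles); a virtually $F\times\mathbb{Z}$ structure (tree case, via Levitt); a quotient $\mathbb{Z}_d*\mathbb{Z}$ or $\mathbb{Z}_p*\mathbb{Z}$; or a quotient $BS(p,q)$ with $|p|,|q|\ge 2$.
\item For coprime $p,q$ a non-linear mechanism is needed: $BS(p,q)$ is highly transitive (Fima--Le Ma\^{\i}tre--Moon--Stalder), so a point stabilizer is maximal of infinite index.
\end{itemize}
Your $|k|=|l|$ argument is correct: you quotient by the normal subgroup $\langle a^k\rangle$ and obtain an HNN extension of a virtually free group over a finite group.

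\textbf{Case (iii).} This case is also left open.
\begin{itemize}
\item Your claim $n\le 2$ is false. For instance $\langle a,t\mid a\,(tat^{-1})(t^2at^{-2})(t^3at^{-3})\rangle\cong F_3\rtimes\mathbb{Z}$ by Magnus--Moldavanskii rewriting. Nothing in the argument should depend on $n$ anyway.
\item The ``main obstacle'' you single out does not exist. If $[\phi]$ has finite order in $\mathrm{Out}(F)$, then $G$ is virtually $F\times\mathbb{Z}$, hence linear and not virtually solvable, and Margulis--Soifer applies. If $[\phi]$ has infinite order, polynomially growing or not, then $G$ is acylindrically hyperbolic (the paper cites Genevois, Cor.~1.5), and you are back in case (i).
\item In the strictly ascending subcase you appeal to an unspecified ``Kapovich/Mutanguha'' result to return to case (i). Proposition \ref{prop:osin} keeps this case separate, so you would need a precise theorem.
\end{itemize}
The paper handles the strictly ascending subcase directly:
\begin{itemize}
\item $P=\phi(F)$ has infinite index by Schreier's formula.
\item Pick $u\in F$ with $\langle P,u\rangle\cong P*\langle u\rangle$ and set $s=ut$. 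This gives a proper free subgroup $D=\langle P,s\rangle\cong P*\langle s\rangle$.
\item $D$ is profinitely dense. In any finite quotient, $\pi(P)=\pi(t)\pi(F)\pi(t)^{-1}\le\pi(F)$ forces $\pi(P)=\pi(F)$. Hence $\pi(u)\in\pi(D)$ and $\pi(t)=\pi(u)^{-1}\pi(s)\in\pi(D)$.
\end{itemize}

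\textbf{Case (i).} The correct input is Hull--Osin's highly transitive action with finite kernel: a point stabilizer of a $2$-transitive action on an infinite set is maximal of infinite index. Gelander--Glasner do not treat acylindrically hyperbolic groups. Also, the existence of a non-elementary hyperbolic quotient does not follow from SQ-universality.
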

	
	\begin{proof}
		To prove the proposition, we will show that in all cases of the Proposition \ref{prop:osin}, either the group is virtually-solvable or has a maximal subgroup of infinite index. 
		
		\begin{enumerate}
			\item[(i)] Every countable acylindrically hyperbolic group admits a highly transitive action on an infinite set with a finite kernel due to Osin and Hull in $\cite{hull-osin}$. 
            In particular, every countable acylindrically hyperbolic group admits a 2-transitive action, that is, for any two ordered pairs of distinct points $(x_1,x_2)$ and $(y_1,y_2)$, there exists $g  \in G$ such that $gx_1 = y_1$ and $g x_2 = y_2$. \\
			Now let $G$ act on an infinite set $\Omega$ and fix $\alpha \in \Omega$. The claim is that the point-stabilizer $M: = \text{Stab}_{G}(\alpha)$ is a maximal subgroup of infinite index. Firstly, since the action is transitive, orbit-stabilizer theorem yields that $|G/M| = | G \alpha | = |\Omega| = \infty$. \\
			Now suppose that $M < L \leq G$. Choose $\ell \in L \setminus M$ and let $\beta: = \ell \alpha \neq \alpha$. Let $\gamma \in \Omega \setminus \{\alpha\}$. Then consider the pairs $(\alpha, \beta)$ and $(\alpha, \gamma)$. By 2-transitivity, there exists $m \in G$ such that $m \alpha = \alpha$ and $m
			\beta = \gamma$ (In general, $M$ acts transitively on $\Omega \setminus \{\alpha\}$). Hence, $m \in M$ with $m \beta = \gamma$. Consequently, 
			\[
			(m \ell) \alpha = m (\ell \alpha) = m \beta = \gamma.
			\]
			Now let $g \in G$ be any element of $G$. Then as $M < L$, $m \ell \in L$ and hence $L$ acts transitively on $\Omega$. Now let $g \in G$. Since $L$ is transitive, there exists $l \in L$ such that $l \alpha = g \alpha$. Thus, we have $l^{-1} g \in M$. Consequently, $g \in L$ and hence $L= G$.

			\item[(ii)] 
			J.O. Button and R. P. Kropholler have shown the following result for a group $G$ as in case $(ii)$. (see \cite{buthell})
			\begin{thm*}
				If the group $G$ is as in Case \textup{(ii)} of the preceding Proposition \ref{prop:osin},
				then $G$ is a generalized Baumslag-Solitar group. Moreover, any generalized
				Baumslag-Solitar group is either $SQ$-universal or it is isomorphic to the
				Baumslag-Solitar group $BS(1,j)$ for some $j\in \mathbb{Z}\setminus\{0\}$,
				or is infinite cyclic.
			\end{thm*} 
			We will follow the proof of this theorem and argue that every possible group arising is either virtually solvable or has a maximal subgroup of infinite index. Kropholler has shown in Theorem C of \cite{Kropholler1990} that a non-cyclic finitely generated group of cohomological dimension 2 that have an infinite cyclic s-normal subgroup are exactly generalized Baumslag-Solitar groups. A non-free one-relator group has cohomological dimension 2 if the relator is not a proper power by Lyndon $\cite{Lynerdon}$. On the other hand, a proper power gives rise to a group with torsion whereas the groups as in case $(ii)$ are torsion-free. \\
			Now let $\Gamma$ be the graph of groups resulting in our generalized Baumslag-Solitar group.
			For every vertex \(v\), let
			\[
			G_v=\langle a_v\rangle\cong \mathbb{Z}.
			\]
			
			If an edge \(e\) joins the vertices \(v\) and \(w\), let its labels at
			\(v\) and \(w\) be \(m_e,n_e\neq 0\), respectively. These labels mean
			that the edge group \(G_e=\langle c_e\rangle\cong\mathbb{Z}\) embeds
			into the adjacent vertex groups according to
			\[
			c_e\longmapsto a_v^{m_e},
			\qquad
			c_e\longmapsto a_w^{n_e}.
			\]
			
			Choose a maximal subtree \(T\subseteq\Gamma\). Then the corresponding
			GBS group has the presentation
			\[
			G=
			\left\langle
			a_v,\
			t_e\
			\ \middle|\
			\begin{aligned}
				a_v^{m_e}&=a_w^{n_e}
				&&\text{if }e=[v,w]\in E(T),\\
				t_ea_v^{m_e}t_e^{-1}&=a_w^{n_e}
				&&\text{if }e=[v,w]\in E(\Gamma)\setminus E(T)
			\end{aligned}
			\right\rangle 
			\]
			where $v \in V(\Gamma)$ and $e \in E(\Gamma) \setminus E(T)$. \\
			We will investigate our group with respect to the number of cycles in $\Gamma$. Without loss of generality, assume that after elementary collapses, every label with $\pm 1$ occurs only on a self-loop.
			
			\begin{enumerate}
				\item[1.] \textbf{$\Gamma$ has more than one cycle.} \\
				For each cycle, we add a stable letter to our group $G$ and we can quotient out $G$ by vertex groups to leave only the stable letters which do not relate to each other. Hence, $G$ surjects onto the free group $F_2$ and thus Margulis-Soifer theorem applies.

				\item[2.] \textbf{$\Gamma$ is a tree.}  \\
				If $\Gamma$ is a tree, then there are no stable letters and 
				\[
				G = \langle a_v \ | \ a_v^{m_e} = a_w^{n_e} \ \text{whenever} \ e = [v,w] \rangle
				\]
				Since $\Gamma$ is a finite graph by the definition of $GBS$, we can find a common power for all the generators $a_v$. Let $z = a_v^{N_v}$ be that common power for some $N_v \neq 0 $. Hence, $z$ commutes with every generator $a_v$ and thus $z$ is central. Now Levitt's Proposition 4.1 in $\cite{levitt}$ states that a $GBS$ group has nontrivial center if and only if it is a mapping torus group $F \rtimes_\alpha \mathbb{Z}$ where $F$ is a nonabelian (otherwise $G$ is solvable) free group and $[\alpha]$ has finite order in $\text{Out}(F)$.
				It is known that $G$ then has a finite-index non-virtually-solvable normal linear subgroup isomorphic to $F \times \mathbb{Z}$ and hence the group is large (see the first part of case $(iii)$ for a proof of this fact).  
				
				\item[3.] \textbf{$\Gamma$ has exactly one cycle which is not a self-loop.}
				
				We remove an edge $e$ whose endpoint vertex groups are $\langle h_1 \rangle$ and $\langle h_2 \rangle$ from the cycle and the remaining graph becomes a tree $T$. Denote the fundamental group of this tree by $H$. Now notice that we can make $G$ into an HNN extension by reattaching the edge $e$, that is, 
				\[
				G = \langle H, t \ | \ th_1^{m}t^{-1} = h_2^{n} \rangle
				\]
				where $m,n \neq 0, \pm 1$. Since the finite tree $T$ has $|E| = | V|-1$ edges, the group $H$ has $|V|$ generators and $|V|-1$ relations. Thus, the number of generators in $G$ will also be one more than that of relators as we add a generator $t$ and a relation $th_1^{m}t^{-1} = h_2^{n}$. Therefore, there is an epimorphism $\theta : H \to \mathbb{Z}$. Moreover, $\theta$ must be injective on every vertex group. Indeed, if $a_v^{m} \in \text{ker}(\theta)$ for some label $m \neq 0$, then $m \theta(a_v) = 0$ which implies that $\theta$ annihilates the vertex group $\langle a_v \rangle$. Now take an edge $e = [v,w]$. We have the relation $a_v^{m} = a_w^{n}$ corresponding to this edge. Since $\theta(a_v)=0$, we also have $\theta(a_w)=0$ and thus $\theta$ annihilates the vertex group $\langle a_w \rangle$. Repeating this process along the connected tree, $\theta$ kills every vertex group; however, the vertex groups generate $H$ which would force $\theta$ to be the zero homomorphism, contradicting the surjectivity. Set $\theta(h_1) = k_1$, $\theta(h_2)=k_2$. Note that since $\theta$ is injective on each vertex group, $k_1 \neq 0$ and $k_2 \neq 0$.  \\ Let $p = k_1 m$ and $q = k_2 n$ and write the Baumslag-Solitar group as $BS(p,q) = \langle a, u \ | \ ua^{p}u^{-1} = a^{q} \rangle$. Now define $\Phi: G \to BS(p,q)$ by $\Phi(h) = a^{\theta(h)}$ and $\Phi(t) = u$. The map is well-defined as it preserves the HNN relation: 
				\[
				\Phi(th_1^{m}t^{-1}) = u \Phi(h_1)^{m} u^{-1} = u a^{k_1 m} u^{-1}
				\]
				and
				\[
				\Phi(h_2^{n}) = a^{k_2 n}
				\]
				which are equal in $BS(p,q)$. Hence, $G$ surjects onto $BS(p,q)$ with $|p|,|q| >1$. 
				If $d: = \text{gcd}(|p|,|q|) > 1$, then impose the relation $a^d=1$. Since $d \ | \ p,q$ the Baumslag-Solitar relation becomes trivial, and we obtain that
				\[
				BS(p,q) \twoheadrightarrow \langle a, u \ | \ a^{d}=1 \rangle \cong \mathbb{Z}_d * \mathbb{Z}.
				\]
				As the free product $\mathbb{Z}_d * \mathbb{Z}$ is non-elementary virtually free, hence it is large. Now pulling back the maximal subgroup of infinite index in $\mathbb{Z}_d * \mathbb{Z}$ twice, we get a maximal subgroup of infinite index of $G$. \\ 
				If $\text{gcd}(|p|,|q|) =1$, then we refer to the Proposition $8.8$ in $\cite{fmms}$: 
				\begin{prop}
					Let $p,q \in \mathbb{Z}\backslash \{0\}$. The following are equivalent:
					\begin{enumerate}
						\item[(a)] $|p| \neq 1$, $|q| \neq 1$, and $|p| \neq |q|$;
						\item[(b)] $\operatorname{BS}(p,q)$ admits a highly transitive and highly faithful action;
						\item[(c)] $\operatorname{BS}(p,q)$ is highly transitive.
					\end{enumerate}
				\end{prop}
				Since $|p|,|q| \geq 2$ and $\text{gcd}(|p|,|q|) =1$, we have $|p| \neq |q|$ and thus $BS(p,q)$ admits a highly transitive action and therefore has a maximal subgroup of infinite index. In fact, we will see in Section 4 that if $|p| \neq |q|$, $|p|,|q|>1$ then $BS(p,q)$ admits a free maximal subgroup of infinite index. 
				
				\item[4.] \textbf{$\Gamma$ has exactly one cycle which is a self-loop} \\
				If the loop $L$ is all of $\Gamma$ then the $G \cong BS(p,q)$ and we have already taken care of these in above paragraph. 
				Suppose now that $L \neq \Gamma$. Choose an edge $e \notin L$. 
				Since $L$ is the only cycle in $\Gamma$, the edge $e$ is separating. 
				Removing the interior of $e$ produces two connected components 
				$\Gamma_1$ and $\Gamma_2$, where $\Gamma_1$ is a tree and 
				$\Gamma_2$ contains $L$. Let
				\[
				G_i=\pi_1(\Gamma_i).
				\]
				Denote the endpoints of $e$ by $v_i\in \Gamma_i$, and let $h_i$ 
				generate the corresponding vertex group. If the labels of $e$ at 
				$v_1$ and $v_2$ are $k$ and $\ell$, respectively, then
				\[
				G \cong G_1
				*_{\langle h_1^k\rangle=\langle h_2^\ell\rangle}
				G_2.
				\]
				Since the labeled graph is reduced and $e$ has distinct endpoints, we have $|k|, |\ell| >1$. 
				
				Because $\Gamma_1$ is a tree, its standard presentation has one more
				generator than relator. Consequently, there is an epimorphism $\theta\colon G_1 \to \mathbb{Z}$.
				
				Choose a prime $p \ | \ k$, and compose $\theta$ with reduction modulo
				$p$ to obtain an epimorphism $\overline{\theta} : G_1 \to \mathbb{Z}_p$.
				
				Since $p \ | \ k$, we have
				\[
				\overline{\theta}(h_1^k)
				=k\overline{\theta}(h_1)
				=0.
				\]
				
				On the other hand, the stable letter corresponding to \(L\) defines an
				epimorphism	$\rho : G_2 \to \mathbb{Z}$
				which sends every vertex group to $0$. In particular $\rho(h_2^\ell)=0$.
				
				Thus, $\overline{\theta}$ and $\rho$ agree on the amalgamated subgroup,
				since both send it to the identity. By the universal property of
				amalgamated free products, they induce an epimorphism $\Phi : G \to \mathbb{Z}_p * \mathbb{Z}$.
				
				The group $\mathbb{Z}_p * \mathbb{Z}$ is a non-elementary virtually free
				group, and hence it has a maximal subgroup of infinite index.
				
			\end{enumerate} 		
			
			\item [(iii)] Finally, we are considering the case of an ascending HNN extension. We will have different treatments for strictly ascending and non-strictly ascending extensions. 
			Let us note that in the proofs below, we will assume that $\text{rank}(F) \geq 2$ because 
			\begin{enumerate}
				\item If $\text{rank}(F)=0$, then $G \cong \mathbb{Z}$
				\item If $\text{rank}(F)=1$ and $\phi$ is an automorphism then either $G \cong \mathbb{Z}^{2}$ or $G$ is the Klein-bottle group.
				\item If $\text{rank}(F)=1$ and $\phi$ is not surjective, then $G \cong BS(1,q)$ which is solvable 	
			\end{enumerate}
			and all these groups are virtually solvable. 
			\item[(a)] \textbf{The extension is non-strictly ascending} \\
			First, suppose that the HNN extension is non-strictly ascending. Then $\phi : F \to F$ is an automorphism, and $G = F \rtimes_\phi \mathbb{Z}$. First assume that $[\phi]$ has a finite order in $Out(F): = Aut(F) /Inn(F)$. Then it is an easy exercise that $G$ has a finite-index non-virtually-solvable linear subgroup isomorphic to $F \times \mathbb{Z}$. Thus $G$ is also a non-virtually-solvable linear group as a finite-index extension. Hence, Margulis-Soifer theorem applies.
            
			
			On the other hand, if $[\phi]$ has infinite order in $Out(F_n)$ then, $G$ is known to be acylindrically hyperbolic by Corollary 1.5 in $\cite{Gen}$. \\
			
			\item[(b)] \textbf{The extension is strictly ascending} \\
			Now let $G$ be a strictly ascending HNN extension of a non-abelian finitely generated free group. We will prove that $G$ has a profinitely dense subgroup and hence has a maximal subgroup of infinite index by the Proposition \ref{prop:profinitedense}. Suppose $\text{rank}(F) = n$ and $P: = \phi (F)$. Notice that $P$ has infinite index in $F$ by the Schreier index formula. Indeed, if $[F:P]=d$ then 
			\[
			n = d(n-1) +1
			\] 
			which implies that $d=1$, contradicting the fact that $\phi$ is non-surjective. Then by the Lemma 5.7 in $\cite{min25}$, there exists $u \in F \setminus \{1\}$ such that $\langle P, u \rangle \cong P * \langle u \rangle$. It is clear that $\langle P, uPu^{-1} \rangle \cong P * uPu^{-1}$. \\
			Now we give an alternative presentation for $G$ to apply Lemma 5.2 from $\cite{min25}$. Let $s = ut$ and $\theta = \text{Inn}(u) \circ \phi$. In other words, we have $\theta(x) = u \phi(x) u^{-1} = u txt^{-1} u^{-1} = sxs^{-1}$. Hence, $G$ has the following presentation 
			\[
			G = \langle F, s \; | \; sxs^{-1} = \theta(x) \; \forall x \in F \rangle.
			\]
			Now applying Lemma 5.2 in $\cite{min25}$ (where $H =P$ and $\phi = \theta$, and $s$ being the stable letter of the extension), we deduce that $\langle P, N, s \rangle \cong P * \langle N, s \rangle$ where $N: = \bigcap_{j=0}^{\infty} \theta^{j}(F)$. In particular, $\langle P, s \rangle \cong P * \langle s \rangle$. \\
			
			Now let $D: = \langle P,s \rangle$. Notice that since $s$ has infinite order, $D \cong F_{n+1}$. We claim that $D$ is a proper profinitely dense subgroup of $G$. It is clear that $D$ is a proper subgroup of $G$.\\

			Now let $\pi : G \to Q$ be a homomorphism to a finite group $Q$. Returning to the presentation $$G = \langle F,t \; | \; txt^{-1} = \phi(x) \rangle,$$ we will show that $\pi(F) = \pi (P)$ and $\pi(t) \in \pi(D)$ so that $D$ and $G$ have the same images. Since $P = \phi(F) = tFt^{-1} \leq F$, we have 
			\[
			\pi(P) = \pi(t) \pi(F)\pi(t)^{-1} \leq \pi(F)
			\]
			and hence the subgroups $\pi(P)$ and $\pi(F)$ are conjugate. Moreover, since $\pi(P)$ and $\pi(F)$ are finite conjugate subgroups, they have the same order. Therefore, $\pi(P) = \pi(F)$ as $\pi(P) \leq \pi(F)$. \\
			Lastly, since $u \in F$ and $\pi(F) = \pi(P) \leq \pi(D)$, we get $\pi(u) \in \pi(D)$. Thus, $\pi(t) = \pi(u)^{-1} \pi(s) \in \pi(D)$. Therefore, $\pi(D) = \pi(G)$.
			
		\end{enumerate}
	\end{proof}

\section{Free maximal subgroups of infinite index}

The main goal of this section is to prove that every non-cyclic one-relator group with torsion and every non-cyclic one-relator $2$-free group has a free maximal subgroup of infinite index. One can see that one-relator groups with torsion might contain both free and non-free maximal subgroups of infinite index simultaneously. For one-relator groups with more than two generators, it is easy to provide examples, for instance, a simple example would be $G_1 = \langle a,b,c \ | \ c^2=1 \rangle$ which surjects onto $\mathbb{F}_2$ after fixing the generators $a,b$ and killing $c$. Then the pullback of any maximal subgroup of infinite index of $\mathbb{F}_2$ will contain an element of order $2$ and hence cannot be free. On the other hand, it must also contain a free maximal subgroup of infinite index by Proposition \ref{prop:torsionand2free} below. 

\medskip 

A more delicate question is whether or not there is a $2$-generated one-relator group containing both free and non-free maximal subgroups of infinite index. One example of such a group is $G_2 = \langle a,b \ | \ [a,b]^{p}=1 \rangle$ where $p$ is a sufficiently large prime number for which a Tarski monster of exponent $p$ exists. Now let $T$ be the Tarski monster group corresponding to $p$ and assume it is generated by $x,y$. Since $T$ has exponent $p$, $[x,y]^{p}=1$ and thus there is an epimorphism $G_2 \to T$ defined by $a \mapsto x$ and $b \mapsto y$. For $T$ is nonabelian, we have $[x,y] \neq 1$. Thus, the subgroup $C = \langle [x,y] \rangle$ has order $p$ and is maximal. However, the preimage of $C$ cannot be free since it will contain $[a,b]$. 

\medskip 

The same phenomenon happens for torsion-free one-relator groups as well. The group $G_3 = \langle a, t \ | \ [a,tat^{-1}]=1 \rangle$ has both free and non-free maximal subgroups of infinite index. To see why there is a free maximal subgroup of infinite index, put $b = tat^{-1}$ and write $H_1 = \langle a,b,t \ | \ [a,b]=1, tat^{-1}=b \rangle$ which makes $H_1$ into a non-ascending HNN extension of $H_2 = \langle a,b \ | \ [a,b]=1 \rangle \cong \mathbb{Z}^{2}$. Then one can obtain a free maximal subgroup of infinite index using Proposition $\ref{highnn}$ (it must be checked that the action on the boundary is topologically free and it is fairly easy to see in this case). To show that it has a non-free maximal subgroup of infinite index, consider the quotient $H_1 / \langle \langle a^2 \rangle \rangle$. Notice that it becomes HNN extension $H_3 = \langle \bar{a}, \bar{b}, \bar{t} \ | \ \bar{a}^{2} = \bar{b}^{2} = [\bar{a}, \bar{b}]=1, \bar{t} \bar{a} \bar{t}^{-1} = \bar{b} \rangle$. Using Proposition $\ref{highnn}$ again we obtain a maximal subgroup of infinite index, whose pullback to $H_1$ cannot be free since it will contain $\langle a^2, b^2 \rangle \cong \mathbb{Z}^{2}$. 

\medskip 

However, not every torsion-free one-relator group possesses a free maximal subgroup of infinite index. In fact, even centerless one-relator torsion-free groups may fail to contain a free maximal subgroup of infinite index. A pleasant example would be $G_4:=BS(n,-n), n\geq 2$ since one can show that every maximal subgroup of infinite index in $BS(n,-n) = \langle a, t \ | \ ta^{n}t^{-1} = a^{-n} \rangle$ contains a copy of $\mathbb{Z}^{2}$. Let $z = a^{n}$ and $N = \langle z \rangle \cong \mathbb{Z}$. Since $a$ commutes with $z$ and conjugation by $t$ inverts $z$, $N \unlhd G_4$. Suppose that $M$ is a maximal subgroup of infinite index. First we observe that $N \leq M$, otherwise $G= MN$ by the maximality of $M$ and hence 
\[
	[G_4: M] = [MN:M] = [N:N \cap M] = \infty
\]
which would force $N \cap M = 1$ since $N \cong \mathbb{Z}$. Consequently, we would have $M < M \langle z^{2} \rangle < G_4$, contradicting the maximality of $M$. Moreover, $M$ contains an element $u$ that inverts $z$. Conjugation on $N$ induces a well-defined surjection $\delta: G_4 \to \mathbb{Z}_2$ defined by $a \mapsto 0$ and $t \mapsto 1$. Now choose an element $u \in M \setminus \ker(\delta)$ (such $u$ exists, by maximality and $[G_4:M]= \infty$). Since $N \unlhd G_4$, conjugation by $u$ is an automorphism of $N$ and hence we have either $uzu^{-1} = z$ or $uzu^{-1} = z^{-1}$. From the repeated application of the relations $aza^{-1} = z$ and $tzt^{-1} = t^{-1}zt = z^{-1}$,  we get $uzu^{-1} = z^{(-1)^{k}}$ where $k$ is the number of occurrences of $t^{\pm 1}$. Then if $uzu^{-1} = z$, $k$ must be even and thus by definition of $\delta$, we would have $u \in \ker(\delta)$. Since $u \notin \ker(\delta)$ by choice, we have $uzu^{-1} = z^{-1}$. Then $u^2$ and $z$ commute. Moreover, if $u^{2k} = z^l$ for some integers $k,l$ then conjugation by $u$ gives $z^{l} = z^{-l}$. Hence, we have $k = l =0$ by torsion-freeness. Thus, $\mathbb{Z}^2 \cong \langle u^2, z \rangle \leq M$.
	
	\begin{prop} \label{prop:torsionand2free}
		If $G$ is a non-cyclic one-relator group that has torsion or is $2$-free, then $G$ has a free maximal subgroup of infinite index.
	\end{prop}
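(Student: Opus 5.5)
The plan is to build a free maximal subgroup of infinite index directly, as the union of an increasing chain of finitely generated free subgroups, each a free factor of the next. Zorn's lemma alone does not preserve freeness, so the chain will be controlled by hand. Enumerate $G=\{g_1,g_2,\dots\}$ and fix a finite generating set $X$ of $G$. I aim to construct finitely generated free subgroups $F_0\le F_1\le F_2\le\cdots$ of infinite index with the following properties:
\begin{enumerate}
\item[(1)] $F_{k-1}$ is a free factor of $F_k$;
\item[(2)] for each $k$, either $g_k\in F_k$, or $\langle F_k,g_k\rangle=G$ while $G\neq F_k$ persists along the chain.
\end{enumerate}
Set $M=\bigcup_k F_k$. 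By (1), a basis of $F_{k-1}$ extends to a basis of $F_k$, so the union of these bases is a basis of $M$, and $M$ is free. By (2), any $g\notin M$ satisfies $\langle M,g\rangle=G$, so $M$ is maximal as long as $M\neq G$. Properness and infinite index will be guaranteed by keeping every $F_k$ inside a fixed proper subgroup, or by making the $F_k$ avoid a fixed element $c$ chosen at the start.

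The hypotheses are used to obtain the hyperbolic-type geometry needed for the inductive step. If $G$ has torsion, then the relator is a proper power $w^n$, $n\ge 2$, and $G$ is word hyperbolic by Newman's spelling theorem; it is non-elementary because $G$ is non-cyclic. Here I would work with quasiconvex free subgroups of infinite index. In the $2$-free case, every $2$-generated subgroup is free. I would use this together with the Freiheitssatz and the analysis of Section 3 (via Proposition \ref{prop:osin}, acylindrical hyperbolicity or a suitable action on a tree) to get the same supply of loxodromic elements in general position. In either case the basic tool is a combination lemma of Arzhantseva/Gitik type: if $F$ is a finitely generated, quasiconvex (or suitably separated) free subgroup of infinite index and $h$ is a loxodromic element that is generic with respect to $F$ (for instance a sufficiently high power of a loxodromic element whose fixed points avoid the limit set of $F$), then $\langle F,h^N\rangle=F*\langle h^N\rangle$, and this subgroup is again quasiconvex of infinite index.

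The inductive step works as follows. Given $F_{k-1}$ and $g=g_k$, first test whether some extension of the form $F_{k-1}*\langle g'\rangle$ with $g\in F_{k-1}*\langle g'\rangle$ is possible while keeping infinite index and avoiding $c$; if so, take it. Otherwise I must find $h$ such that $F_k:=F_{k-1}*\langle h\rangle$ still has infinite index and avoids $c$, yet $\langle F_k,g\rangle=G$. The natural choice is to adjoin elements of the form $h_x=x\,u_x^{N}$ for $x\in X$ (or $h_x=g^{-1}x\,u_x^N$), where the $u_x$ are loxodromics in general position and $N$ is large. Ping-pong should then keep the new generators free over $F_{k-1}$, while the relations $x=h_xu_x^{-N}$ should let one recover $X$ from $F_k$ together with $g$.

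I expect this last point to be the main obstacle. It requires producing elements that are simultaneously "free" relative to $F_{k-1}$ and "generating" once $g$ is added. In the hyperbolic (torsion) case I would try to adapt the Gelander--Glasner argument for primitivity, namely a ping-pong on the boundary using the fact that $g\notin F$ moves the limit set of $F$. In the $2$-free case I would first try to reduce to the torsion/hyperbolic case through the acylindrically hyperbolic alternative of Proposition \ref{prop:osin}, treating cases (ii) and (iii) separately as in Section 3.
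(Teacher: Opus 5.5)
You have not proved the statement. The chain bookkeeping is fine: a nested chain of free factors has free union, and (2) gives maximality once $M\neq G$. But the inductive step, which you yourself flag as the main obstacle, is where the whole proof lies, and as sketched it can fail.

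\textbf{The inductive step.} To recover $x=h_xu_x^{-N}$ from $F_k$ and $g$ you need $u_x\in\langle F_k,g\rangle$, so the only source you indicate is $\langle F_{k-1},g\rangle$. The ping-pong, however, needs the fixed points of $u_x$ to lie off $\Lambda(F_{k-1})$. This breaks whenever $g$ commensurates $F_{k-1}$: then $g\Lambda(F_{k-1})=\Lambda(F_{k-1})$, so every loxodromic of $\langle F_{k-1},g\rangle$ has both fixed points in $\Lambda(F_{k-1})$.

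This really happens. In $\langle a,b\mid a^2\rangle\cong\mathbb{Z}/2*\mathbb{Z}$, the infinite-index free subgroup $\langle b^2,ab^2a\rangle$ is normalized by the involution $a$. Since $a$ can never lie in a free $M$, you are forced into the ``generating'' branch, yet no general-position elements are available there.

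\textbf{Other gaps.} You give no invariant ensuring that a greedy absorption of $g$ leaves later steps feasible.

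Avoiding a fixed $c$ only yields $M\neq G$, not infinite index. In $\langle a,b\mid a^2\rangle$, the kernel of $a\mapsto 1$, $b\mapsto 0$ onto $\mathbb{Z}/2$ is torsion-free, hence free, and is maximal of index $2$. You would have to build in profinite density explicitly. Keeping all $F_k$ inside a fixed proper subgroup $P$ is circular, since maximality would force $M=P$.

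Your plan for the $2$-free case goes through Proposition~\ref{prop:osin}, which cannot apply. A non-cyclic group with a $2$-generator one-relator presentation is not free (Hopficity of $F_2$), hence not $2$-free. So every non-cyclic $2$-free one-relator group needs at least three generators.

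\textbf{How the paper proceeds.} The paper does not build $M$ by hand; freeness comes from how $M$ sits relative to the vertex groups of a splitting.
\begin{itemize}
\item Linton's one-relator hierarchy \cite{linton} gives a splitting $G=\mathrm{HNN}(H,A,B)$ with free Magnus edge groups.
\item In the $2$-free case, $\mathbb{Z}$-stability comes from Louder--Wilton negative immersions \cite{louder}; in the torsion case it comes from Linton directly. Together with the absence of Baumslag--Solitar subgroups, this makes the splitting acylindrical.
\item Since edge groups are torsion-free, long segment stabilizers are trivial. Hence the splitting is non-ascending and the boundary action is topologically free (Proposition~\ref{prop:BSfree}).
\item Take a \emph{free} action of $H$ on a countable set with infinitely many orbits, and extend it to a highly transitive $G$-action as in the proof of Theorem C of \cite{fmms}.
\item A point stabilizer $M$ is then maximal of infinite index. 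Because every conjugate of $H$ acts freely on that set, $M$ meets every vertex stabilizer trivially, so $M$ acts freely on the Bass--Serre tree and is free (Proposition~\ref{highnn}).
\end{itemize}
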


	 First we show that a non-ascending HNN extension acting topologically freely on the boundary of its Bass-Serre tree has a free maximal subgroup of infinite index. 
	
	\begin{prop} \label{highnn}
		Let $\Gamma$ be an HNN extension
		\[
		\Gamma= \operatorname{HNN}(H,A,B)
		\]
		of a countable group $H$ with $A \neq H \neq B$ such that the action of $\Gamma$ on the boundary of its Bass--Serre tree is topologically free. Then $\Gamma$ has a free maximal subgroup of infinite index. 
	\end{prop}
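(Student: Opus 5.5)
The plan is to realize the desired maximal subgroup as a point stabilizer of a suitable $2$-transitive action of $\Gamma$ on a countably infinite set $X$. The argument in case (i) of the proof in Section 3 already shows that a point stabilizer $M$ of a $2$-transitive action on an infinite set is maximal, and it has infinite index by the orbit-stabilizer theorem. So it remains to produce a highly (in particular $2$-) transitive action of $\Gamma$ whose point stabilizers are free. Freeness will come from Bass--Serre theory, as explained in the next paragraph.

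\textbf{Step 1 (freeness from free vertex actions).} Let $\Gamma=\langle H,t \mid tat^{-1}=\varphi(a),\ a\in A\rangle$ with $\varphi:A\to B$. Let $X=H\times\mathbb{N}$ with $H$ acting by left multiplication on the first coordinate, so the action of $H$ is free and has infinitely many orbits. Then $A$ and $B$ also act freely on $X$. Assume for this step that $A$ and $B$ are infinite, so that each has infinitely many infinite orbits, all of the same cardinality $|A|=|B|$. Then there are bijections $\tau:X\to X$ with $\tau(a\cdot x)=\varphi(a)\cdot\tau(x)$. Each such $\tau$ extends the $H$-action to a $\Gamma$-action with $t$ acting as $\tau$. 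For any such action and any $x\in X$, the stabilizer $\Gamma_x$ acts on the Bass--Serre tree of $\Gamma$. The stabilizer in $\Gamma_x$ of a vertex $gH$ is $\Gamma_x\cap gHg^{-1}$, which is trivial because it is conjugate into the $H$-stabilizer of $g^{-1}x$. Hence $\Gamma_x$ acts freely on a tree, and therefore $\Gamma_x$ is free.

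\textbf{Step 2 (genericity of high transitivity).} Consider the Polish space $\mathcal{Z}$ of all such bijections $\tau$, a closed subset of $\mathrm{Sym}(X)$. For each pair of $k$-tuples of distinct points, the set of $\tau$ whose action sends the first tuple to the second is open. Hence the set of $\tau$ giving a highly transitive action is a $G_\delta$ subset of $\mathcal{Z}$, and it suffices to prove that each of these open sets is dense. This is exactly the scheme of Fima--Le Ma\^itre--Moon--Stalder \cite{fmms} for groups acting on trees. Their density argument uses two facts. First, $A\neq H\neq B$, so the HNN extension is non-ascending and the tree has no invariant end, which makes the action of $\Gamma$ on the boundary minimal and of general type. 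Second, the boundary action is topologically free, and this is used to perturb a finite piece of $\tau$ so as to realize the prescribed map on a $k$-tuple without creating unwanted relations. I would invoke their theorem directly, after checking that their space of ``pre-actions'' allows us to fix the vertex action to be the free action on $H\times\mathbb{N}$. As far as I recall, their construction only requires the $A$- and $B$-orbits on $X$ to be infinite and infinite in number, which our choice ensures once $A$ and $B$ are infinite.

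\textbf{Step 3 and the main obstacle.} By the Baire category theorem the generic $\tau$ gives a highly transitive action with free stabilizers, and a point stabilizer is then the required free maximal subgroup of infinite index. I expect the main obstacle to be Step 2: verifying that the density argument of \cite{fmms} goes through with the vertex group action constrained to be free. If their statement is not directly applicable, I would redo the density step by hand. Given partial data of $\tau$ on a finite set and target tuples, I would use topological freeness to find $\gamma\in\Gamma$ of reduced length large enough that its action, computed along the ``fresh'' orbits of $H\times\mathbb{N}$ where $\tau$ is still undefined, moves the finitely many points along new orbits. I would then define $\tau$ there so that $\gamma$ realizes the prescribed map. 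A separate difficulty is the case where $A$ (or $B$) is finite, where the orbit-cardinality condition in Step 1 fails. There one can instead take $X$ large enough to contain infinitely many orbits of each size and repeat the same argument.
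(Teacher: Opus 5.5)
Your proposal is correct and is essentially the paper's proof. You extend a free $H$-action with infinitely many orbits to a highly transitive $\Gamma$-action via (the proof of) Theorem C of \cite{fmms}, and take a point stabilizer: it is maximal of infinite index by the $2$-transitivity argument of Section 3, and free because it meets every conjugate of $H$ trivially, so it acts freely on the Bass--Serre tree. The one flaw is your treatment of finite $A$, which needs no separate case; your suggested fix of allowing orbits of different sizes would in fact destroy the freeness used in Step 1. For $X=H\times\mathbb{N}$, both $A$ and $B$ act freely with countably infinitely many orbits of size $|A|=|B|$, so the intertwining bijection $\tau$ exists whether or not $A$ is infinite.
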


	\begin{proof}
		Start with a free action $H \curvearrowright X$ with infinitely many orbits. Then it extends to a highly transitive action of $\Gamma$ by (proof of) the Theorem C in $\cite{fmms}$. Now fix $x \in X$ and let $M : = \text{Stab}_{\Gamma }(x)$. We know that it is a maximal subgroup of infinite index. \\
		Recall that a group is said to act freely on a graph if there are no edge inversions and vertex stabilizers are trivial. Moreover, Bass-Serre theorem states that a group is free if and only if it acts freely on a tree. We claim that $M$ acts freely on the Bass-Serre tree $T$ of $\Gamma$. \\
		The action $\Gamma$ on its Bass-Serre tree restricts to an action of $M$ on $T$. Since the $\Gamma$-stabilizer of every vertex is conjugate of $H$, for a vertex $aH$, its $M$-stabilizer is then $M \cap aHa^{-1}$. Now suppose that $1 \neq m \in M \cap aHa^{-1}$, that is, the vertex stabilizer is not trivial for some vertex $aH$. Then $m = aha^{-1}$ for some $h \in H$. Since $m$ stabilizes $x \in X$, we have 
		\[
		aha^{-1} x =x.
		\]
		However, this implies that $h(a^{-1}x) = a^{-1}x$ which contradicts the fact that $H$ acts freely on $X$. Hence, $M$ acts freely on a tree and therefore must be free.
	\end{proof}

    \begin{rem}[\textbf{Application to Baumslag-Solitar groups}]
	We note that even though Baumslag-Solitar groups $BS(m,n)$, $|m| \neq |n|$, $|m|,|n|>1$ do not fall into any of the categories in Proposition $\ref{prop:torsionand2free}$, they admit a free maximal subgroup of infinite index. We have already discussed that $BS(n,n)$ and $BS(n,-n)$ do not admit free maximal subgroups of infinite index. In $\cite{harpe}$ (Lemma $20(iii)$), de la Harpe and Préaux have proven that the action of $BS(m,n)$ on the boundary of its Bass-Serre tree is topologically free if and only if $|m| \neq |n|$. Since Proposition $\ref{highnn}$ applies only to non-ascending HNN extensions, we exclude $|m|=1$ and $|n|=1$ (In fact, every maximal subgroup of $BS(1,n)$ has finite index). 
\end{rem}

	\begin{prop} \label{prop:BSfree}
		If a non-cyclic finitely generated one-relator group has torsion or is $2$-free, then it admits a non-ascending HNN splitting whose Bass-Serre boundary action is topologically free. 
	\end{prop}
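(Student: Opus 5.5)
The plan is to obtain a non-ascending HNN splitting from the Magnus--Moldavanskii hierarchy, and then to get topological freeness of the boundary action from the fact that one-relator groups with torsion, and $2$-free one-relator groups, have no large elementary pieces (no $\mathbb{Z}^2$, no Baumslag--Solitar subgroups).

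\textbf{Step 1: produce a splitting.} For a one-relator group $G=\langle X\mid r\rangle$ with $|X|\ge 2$ that is not cyclic, I would use the Magnus--Moldavanskii method. If $|X|\geq 3$, or if some generator has exponent sum zero in $r$, then up to passing to a finite index subgroup or a change of generators, $G$ becomes an HNN extension $G=\mathrm{HNN}(H,A,B)$. Here $H$ is a one-relator group with a shorter relator, and $A,B$ are Magnus free subgroups of $H$ generated by subsets of the generators. When no generator has exponent sum zero, the standard trick embeds $G$ into such an HNN extension, namely a finite index subgroup of an amalgam.

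To keep $G$ itself, I would instead invoke the known result that, after a Nielsen change of generators, such a group splits as an HNN extension of a one-relator group along Magnus subgroups. This holds except when $r$ involves only one generator in a trivial way, in which case $G\cong \mathbb{Z}*\mathbb{Z}_n$ or $F_k*\mathbb{Z}_n$. Those free-product cases split directly as non-ascending HNN extensions, or as amalgams that can be handled the same way.

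\textbf{Step 2: non-ascending.} An ascending Magnus splitting forces $A=H$ or $B=H$. In the torsion case this is impossible, since $H$ has torsion while $A$ and $B$ are free; in the $2$-free case, $H$ would then be free, and $G$ would be a mapping torus of a free group. The ascending mapping-torus case is the main obstacle. I would deal with it by choosing the splitting differently, for instance by taking a different stable letter, or by using the Brown/BNS criterion to pick a character with non-ascending splitting. Failing that, I would use that a strictly ascending HNN extension of $F_n$ contains $BS(1,m)$-like subgroups only in degenerate cases, and derive a contradiction with $2$-freeness.

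\textbf{Step 3: topological freeness.} If the boundary action is not topologically free, some nontrivial $g$ fixes pointwise an open set of ends. That open set contains the shadow of a half-tree, so $g$ fixes a half-tree, hence lies in a conjugate of $A$ or $B$. Moreover, its fixed subtree contains infinite rays branching in every direction, so $g$ lies in $\bigcap$ of infinitely many edge-stabilizers along a half-tree. Using malnormality-type properties of Magnus subgroups in one-relator groups with torsion (Newman's spelling theorem gives that the relevant intersections $A\cap hBh^{-1}$ are small), an element fixing a whole half-tree must commute with, or be normalized by, many hyperbolic elements. This produces either a $\mathbb{Z}^2$ or a Baumslag--Solitar subgroup; in the torsion case it produces a nontrivial element of $A$ centralized by infinite-order elements outside $A$.

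In the torsion case this contradicts the known fact that centralizers of nontrivial elements are cyclic or finite (Newman, B.\,B.). In the $2$-free case, any such subgroup is a non-free $2$-generated subgroup, which contradicts $2$-freeness directly. I expect this third step to be the technically hardest part, together with the ascending-case exclusion in Step 2.
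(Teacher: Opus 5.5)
There is a genuine gap. Your Step 1 is fine and matches what the paper does. The paper takes the first splitting of Linton's one-relator hierarchy, which is a Magnus--Moldavanskii splitting along free Magnus subgroups. You never need a finite-index subgroup here, since Nielsen moves reduce the exponent-sum vector to $(d,0,\dots,0)$. The torsion half of Step 2 also matches the paper.

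What you never establish is the property that drives the rest of the argument: the splitting is \emph{acylindrical}. That is, pointwise stabilizers of segments of length $\ge L$ are finite, hence trivial because edge groups are free. The paper gets this from Linton's Theorem 7.1 $(3)\Rightarrow(2)$, using two inputs. First, the hierarchy is $\mathbb{Z}$-stable: by Corollary 6.7 in the torsion case, and by Corollary 6.9 in the $2$-free case via Louder--Wilton negative immersions for primitivity rank $\ge 3$. Second, $G$ has no Baumslag--Solitar subgroups, by hyperbolicity or by $2$-freeness respectively.

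Your substitute in Step 3 is unjustified: you claim an element fixing a half-tree ``must commute with, or be normalized by, many hyperbolic elements.'' Fixing a half-tree only places $g$ in an intersection of conjugates of $A$ and $B$ along that half-tree. Turning a long chain of such partial conjugations into a $\mathbb{Z}^2$ or a $BS(m,n)$ is exactly the content of $\mathbb{Z}$-stability together with Linton's theorem. Newman's spelling theorem and centralizer results do not give this. Knowing that a single intersection $A\cap hBh^{-1}$ is small does not control the whole chain.

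The same missing ingredient leaves Step 2 open in the $2$-free case. There you list possible strategies rather than giving an argument. The last one points the wrong way: if strictly ascending extensions of $F_n$ rarely contain $BS(1,m)$, that yields no contradiction with $2$-freeness. In the paper, acylindricity excludes ascendance in one line: if $A=H$, then $t^nHt^{-n}$ fixes the segment $H,tH,\dots,t^nH$ for every $n$.

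Alternatively, you can close this step with Euler characteristic. If $A=H$, then $H$ is a finitely generated free group and $G$ is an ascending extension of it, so $\chi(G)=0$. But a non-cyclic $2$-free one-relator group has $n\ge 3$ generators, since a $2$-generated one is itself free and hence cyclic. Lyndon's asphericity then gives $\chi(G)=2-n<0$.

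Once acylindricity is in hand, your half-tree argument finishes as in the paper. Vertex degrees are $[H:A]+[H:B]\ge 4$, so every vertex of the half-tree is the center of a tripod of fixed ends. Hence $g$ fixes arbitrarily long segments and is trivial.
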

	
	\begin{proof}
		First suppose that $G$ has torsion. 	By Linton's Theorem 4.13 in $\cite{linton}$, there exists a one-relator hierarchy (see $\cite{linton}$ for precise definitions)
		\[
		X_N \looparrowright \cdots \looparrowright X_1
		\looparrowright X_0 = X.
		\]
		Writing \(G_i = \pi_1(X_i)\), we have \(G_0 = G\) and
		\[
		G_i \cong \operatorname{HNN}(G_{i+1}, A_i, B_i, \psi_i),
		\qquad 0 \leq i < N,
		\]
		where \(A_i, B_i\) are Magnus subgroups, hence free, and \(G_N\)
		is finite cyclic. Since \(G\) is noncyclic, \(N \geq 1\). For simplicity, we will write the HNN extension of $G$ as 
		\[
		G = \text{HNN}(H,A,B,\phi).
		\] Since $G$ has torsion, and $A,B$ are free groups, we have $A \neq H \neq B$.\\ Corollary 6.7 in the same paper states that every such hierarchy is $\mathbb{Z}$-stable. We know that a one-relator group with torsion is hyperbolic. Since hyperbolic groups do not contain Baumslag-Solitar groups, the Theorem 7.1 $(3) \Rightarrow (2)$ in $\cite{linton}$ implies that the above one-relator hierarchy is acylindrical. In particular, $G$ acts on its Bass-Serre tree acylindrically, that is, there exists an integer $L \geq 1$ such that every geodesic segment $I \subseteq T$ of length at least $L$ has finite pointwise stabilizer $G_{(I)}$.  \\
		For any edge $e \in I$, we have $G_{(I)} \leq G_e$. The edge stabilizers are conjugates of a free group, and thus they are torsion-free. Consequently, $G_{(I)}$ must be trivial as a finite torsion-free group. \\
		In the Bass-Serre tree of an HNN extension, every vertex has degree $[H:A]+ [H:B] \geq 4$ since both subgroups are proper. Now suppose that $g \in G$ fixes a nonempty open set $U$ of $\partial T$. By definition of the boundary topology, there is a half-tree $C$ such that $\emptyset \neq \partial C \subseteq U$. Suppose that $C$ is the component containing $v$ after deleting an edge joining some $u$ and $v$. Since each vertex has degree of at least $4$, we have $3$ distinct edges starting at $v$. Choose $3$ distinct ends $\xi_1, \xi_2$, and $\xi_3$. Their tripod center is the vertex $v$ and is fixed since $g \in G$ fixes each bi-infinite geodesic line connecting the ends $\xi_1,\xi_2$ and $\xi_3$ (note that only two ends would not suffice, since the geodesic could be fixed by translation which would not preserve $v$). In particular, the unique ray from $v$ towards $\xi_1$ is preserved by $g \in G$. Since $g$ preserves distances from $v$ and $v$ is fixed, it fixes the ray pointwise. In particular, $g \in G$ fixes pointwise a segment $I$ of length greater than $L$ and thus $g \in G_{(I)}$. However, we have shown above that all such $G_{(I)}$ are trivial. Hence, the result follows.  \\
		
		Now assume that $G$ is non-cyclic $2$-free one-relator group. Since $G$ is non-cyclic $2$-free, its primitivity rank is at least 3 and thus Theorem 1.3 and Reamrk 1.7 in $\cite{louder}$ implies that $G$ has negative immersions. Now choose a one-relator hierarchy
		\[
		X_N \looparrowright \cdots \looparrowright X_1 \looparrowright X_0 = X.
		\] 
		Each $X_i$ inherits negative immersions since its immersion into $X$ can be composed with any immersion into $X_i$. Thus, every splitting in the hierarchy is $\mathbb{Z}$-stable by Corollary 6.9 in $\cite{linton}$. Clearly, a $2$-free group is torsion-free and cannot contain Baumslag-Solitar groups. Hence, Theorem 7.1 $(3) \Rightarrow (2)$ again implies that $G$ acts acylindrically on its Bass-Serre tree, that is, there exists $L \in \mathbb{N}$ such that the pointwise stabilizer group $G_{(I)}$ of any segment longer than $L$ is finite. \\
		Finally, we observe that the HNN extension is non-ascending. Since $G$ is torsion-free, the pointwise segment stabilizer coming from the acylindricity of the action must be trivial. Suppose that $A = H$. Then from the relation $tat^{-1} = \phi(a)$, we obtain that $tHt^{-1}=B \leq H$ and consequently, $H \geq tHt^{-1} \geq \cdots \geq t^{n}Ht^{-n}$. Considering the geometric segment $I_n$ with the successive vertices $H,tH,...,t^{n}H$, its pointwise stabilizer is $G_{(I_n)} = \bigcap_{i=0}^{n} t^{i} H t^{-i} = t^{n}Ht^{-n}$ which is infinite, contradicting $G_{(I)} =1$ whenever $|I| \geq L$. Thus, the extension cannot be ascending. 
		
	\end{proof}

	\begin{proof}[$\textbf{Proof of Proposition \ref{prop:torsionand2free}}$]
		Combining Proposition \ref{highnn} and Proposition \ref{prop:BSfree} , the result follows.
	\end{proof}

     Proposition \ref{prop:torsionand2free} proves part a) of Theorem \ref{thm:main2}. We have already provided examples in this section to show parts b) and c). These conclude the proof of the theorem. 

\medskip 

 \section{Frattini subgroup of one-relator groups}

   The Baumslag-Solitar group, despite having only finite index maximal subgroups, is still rich with maximal subgroups (in particular, it has non-conjugate maximal subgroups), and the Frattini subgroup is trivial. Same holds for all other infinite solvable one-relator groups (i.e. $\mathbb{Z}, \mathbb{Z}^2, \pi_1(\mathrm{Klein \ bottle})$). In this section, we will show that non-solvable one-relator groups also have a trivial Frattini subgroup.  

\medskip 

     Let $ G=\langle x_1,\ldots,x_k\mid R\rangle $ be a one-relator presentation of $G$. Since $G$ is non-solvable, $k\geq 2$.

\medskip 

We first recall a theorem of B. B. Newman \cite{Newman1968} that a one-relator group has a trivial Frattini subgroup if either it has torsion and has more than one generator, or it is torsion-free and has more than two generators. Consequently, if $G$ has torsion, then $ \Phi(G)=1,$ and if $G$ is torsion-free and $k\geq 3$, again $ \Phi(G)=1.$ Thus, it remains to consider the case $G=\langle a,b\mid R\rangle, $ where $G$ is torsion-free and non-solvable.

\medskip 

We shall use the following consequence of the argument of Gelander and Glasner  \cite{ggc}. Let $\Psi(L)$ denote the intersection of all maximal subgroups of infinite index of a group $L$. Clearly, $\Phi(L)\leq \Psi(L). $ If a countable group $L$ acts
minimally on a tree $T$, $|\partial T|>2$, and the action fixes no end, then $\Psi(L)$ acts trivially on $T$. Hence $\Phi(L)\leq \ker(L\curvearrowright T). \ (1) $

\medskip 

We now use the result of Osin-Minasyan stated as Proposition 6.1 in \cite{min25}. It implies that a two-generated one-relator group $G$ satisfies at least one of the following:

\begin{enumerate}
\item[(i)] $G$ is a generalized Baumslag--Solitar group;

\item[(ii)] $G$ is a strictly ascending HNN extension of a finitely
generated non-abelian free group;

\item[(iii)] $G$ is acylindrically hyperbolic.
\end{enumerate}

We consider these three cases separately.

\medskip

\noindent
{\bf Case 1. $G$ is acylindrically hyperbolic.}

Hull proved that the Frattini subgroup of every countable
acylindrically hyperbolic group is finite \cite{hullsmall}. Since $G$ is torsion-free, $G$ has no non-trivial finite subgroup. Therefore $ \Phi(G)=1. $

\medskip

\noindent
{\bf Case 2. $G$ is a strictly ascending HNN extension.}

In this case $G=\langle F,t\mid tft^{-1}=\varphi(f),\ f\in F\rangle, $
where $F$ is a free group of finite rank $r\geq 2$, and
$\varphi:F\longrightarrow F$ is injective but not surjective. We first observe that $[F:\varphi(F)]=\infty.\ (2)$ Indeed, $\varphi(F)\cong F$, so $\operatorname{rank}\varphi(F)=r.$ If $[F:\varphi(F)]=d<\infty$, then Schreier's formula gives $r=1+d(r-1).$ Since $r\geq 2$, this forces $d=1$, contradicting the fact that $\varphi(F)$ is a proper subgroup of $F$.




\medskip

Suppose, for a contradiction, that $\Phi(G)\neq1 $. Let $ \chi:G\to \mathbb Z, \chi(t)=1, \chi(F)=0.$ Since epimorphisms send Frattini subgroups into Frattini subgroups, $ \Phi(G)\leq\ker\chi =\displaystyle \mathop{\bigcup}_{n\geq0}t^{-n}Ft^n.$ Write  $P = \phi (F)$. Normality of $\Phi(G)$ therefore implies that $\Phi(G)\cap P\neq 1 $ (conjugate a nontrivial Frattini element into $F$, and then conjugate once more by $t$). Choose  $1\neq w\in\Phi(G)\cap P$. By Lemma 5.7 of \cite{min25}, choose $v\in F$ such that
$ \langle P,v\rangle\cong P*\langle v\rangle. $ Let $u=vwv^{-1}$. Then $u\in\Phi(G)$, and free-product normal forms give

$$ \langle P,u\rangle\cong P*\langle u\rangle, \ \mathrm{and} \ \langle P,uPu^{-1}\rangle\cong P*(uPu^{-1}). $$

Here the first assertion uses $1\neq w\in P$: the cyclic group generated by $vwv^{-1}$ lies in the conjugate free factor $vPv^{-1}$.

\medskip 

Now let $s=ut, \theta=\operatorname{Inn}(u)\circ\varphi $.  Then $G=\langle F,s\mid sfs^{-1}=\theta(f),\ f\in F\rangle,  \theta(F)=uPu^{-1}. $

\medskip 

Applying Lemma 5.2 of \cite{min25} (as in Section 3), with $H=P$, gives

$$ D:=\langle P,s\rangle\cong P*\langle s\rangle\cong F_{r+1}. $$

\medskip 

Since $r+1\geq3$, whereas $G$ is two-generated, $D\neq G$. On the other hand, $\langle D,u\rangle=G $ because the left-hand side contains $t=u^{-1}s$, and then contains $F=t^{-1}Pt$. This contradicts $u\in\Phi(G)$, since adjoining a Frattini element to a proper subgroup cannot generate $G$. Thus $\Phi(G)=1.$

\noindent
{\bf Case 3. $G$ is a generalized Baumslag-Solitar group.}

Let $T$ be a minimal GBS tree for $G$. Since $G$ is non-solvable, the
action is non-elementary and fixes no end. In particular, $|\partial T|>2.$
Hence Gelander-Glasner's theorem gives
$\Phi(G)\leq K, \ K=\ker(G\curvearrowright T). $ (3)

\medskip 

Let $\Delta:G\longrightarrow \mathbb{Q}^{*} $ be the modular homomorphism of $G$. Suppose first that $G$ is not unimodular; that is, $\Delta(G)\not\subseteq\{\pm1\}.$
For a non-elementary GBS group, Levitt proved that the kernel of the
action on the Bass-Serre tree can be non-trivial only in the unimodular case \cite{levitt2007}. Thus $K=1.$
Equation (3) therefore gives $\Phi(G)=1.$

\medskip 

It remains to consider the unimodular case. Thus
$\Delta(G)\subseteq\{\pm1\}. $ Levitt constructs in this case a homomorphism
$\tau:G\longrightarrow\operatorname{Isom}(\mathbb R) $
such that
$$
\tau(G)\cong
\begin{cases}
\mathbb Z, & \Delta(G)=\{1\},\\
D_{\infty}, & \Delta(G)=\{\pm1\}, \end{cases} \  $$
and such that
$ \ker\tau \ 
\text{contains no non-trivial elliptic element of }G.
\  $

\medskip 

We use the elementary fact that if
$q:L\twoheadrightarrow Q $ is an epimorphism, then $ q(\Phi(L))\leq\Phi(Q). $
Now $\Phi(\mathbb Z)=1 \  \text{and} \  \Phi(D_{\infty})=1. \ $
Using the epimorphism  $\tau:G\twoheadrightarrow\tau(G)$, we obtain $\tau(\Phi(G))=1.$ Therefore $\Phi(G)\leq\ker\tau. $

\medskip 

On the other hand, by (3), $\Phi(G)\leq K.$ Thus, every element of $\Phi(G)$ acts trivially on the Bass-Serre tree $T$, and hence every element of $\Phi(G)$ is elliptic. But $\ker\tau$ contains no non-trivial elliptic element. Thus, $\Phi(G)=1.$

\medskip 

This settles all three alternatives. Together with Newman's theorem,
we conclude that $\Phi(G)=1 $ for every non-solvable one-relator group $G$.
	
	\medskip 

    \section{Questions}

     We would like to ask several questions. We believe that these questions are within reach.

 \medskip 
 
     {\bf Question 1.} Is there a one-relator group with a finitely generated infinite index maximal subgroup? 

     For a broad class of one-relator groups, we can show the non-existence of such a maximal subgroup; however, we do not know a single example of a one-relator group which possesses a finitely generated maximal subgroup of infinite index.   

    \medskip 
 
     {\bf Question 2.} Can one characterize one-relator groups where every infinite index maximal subgroup is free? 

      The question seems to be interesting for linear groups as well. It is related to two well-known questions. The first one is a conjecture due to Mikhael Gromov: {\em Does every one-ended hyperbolic group contain a subgroup isomorphic to a closed hyperbolic surface group?} The second question (due to Kevin Whyte) asks: {\em if $\Gamma $ is a one-ended hyperbolic group which is not a virtually surface group, can all infinite index subgroups of $\Gamma $ be free?} Our question seems to be  easier. Let us also mention that Wilton \cite{wilton2026surfacegroupscubulatedhyperbolic} has recently shown that if every infinite index subgroup of an infinite one-relator group $\Gamma $ is free, then $\Gamma $ is a closed surface group or a free group.         

     \medskip 
 
     {\bf Question 3.} Can one identify all one-relator groups $\Gamma $ that have a maximal subgroup $M$ such that $gMg^{-1}\cap M = \{1\}$ for some $g\in \Gamma $?

     \medskip 

      The answer is negative for one-relator groups with an infinite center. Gelander and Meiri establish this result for $PSL(n,\mathbb{Z}), n\geq 3$ \cite{gm}, i.e. these groups possess a maximal subgroup having a trivial intersection with its conjugate. 

      \medskip

     {\bf Question 4.} Do all non-solvable one-relator groups have $2^{\aleph _0}$ maximal subgroups? 

     Again, in \cite{gm}, a positive answer is given for $SL(n,\mathbb{Z}), n\geq 3$. In the original works of Margulis and Soifer, it is shown that finitely generated non-virtually solvable linear groups have uncountably many infinite index maximal subgroups.

		\bibliographystyle{IEEEtranS}
		\bibliography{refs}
		
	\end{document}